\newcommand{\nodeset}{\mathcal{N}}
\newcommand{\dse}{\,\mbox{$\perp$}\,}
\newcommand{\cip}{\mbox{\,$\perp\!\!\!\perp$\,}}
\newcommand{\sk}{\mathrm{sk}}
\newcommand{\cd}{\,|\,}
\newcommand{\ci}{\mbox{\protect $\: \perp \hspace{-2.3ex}
\perp$ }}
\newcommand{\notdse}{\nolinebreak{\not\hspace{-1.5mm}\dse}}
\newcommand{\notci}{\nolinebreak{\not\hspace{-1.5mm}\ci}}
\newcommand{\n}[0]{\hspace*{.35em}}
\newcommand{\nn}[0]{\hspace*{.7em}}
\newcommand{\node}{\mbox {\LARGE
{$\mbox{$\circ$}$}}}
\newcommand{\fla}{\mbox{$\hspace{.05em} \prec
\!\!\!\!\!\frac{\nn \nn}{\nn}$}}
\newcommand{\fra}{\mbox{$\hspace{.05em} \frac{\nn
\nn}{\nn
}\!\!\!\!\! \succ \! \hspace{.25ex}$}}
\newcommand{\arc}{\mbox{$\hspace{.06em} \prec
\!\!\!\!\!\frac{\nn \nn}{\nn}
\!\!\!\!\!
\succ\! \hspace{.25ex}$}}
\newtheorem{prop}{Proposition}
\newtheorem{coro}{Corollary}
\newtheorem{lemma}{Lemma}
\newtheorem{alg}{Algorithm}
\newtheorem{theorem}{Theorem}
\newtheorem{example}{Example}
\numberwithin{equation}{section}
\theoremstyle{plain}
\theoremstyle{remark}
\theoremstyle{definition}
\begin{document}

\begin{frontmatter}
\title{On Finite Exchangeability and Conditional Independence}
\runtitle{On Finite Exchangeability and Conditional Independence}

\begin{aug}
\author{\snm{Kayvan Sadeghi}\ead[label=e1]{k.sadeghi@ucl.ac.uk}}

\address{Department of Statistical Science, University College London, Gower Street, London, WC1E 6BT, United Kingdom}


\runauthor{K. Sadeghi}

\affiliation{University College London}

\end{aug}

\begin{abstract}
We study the independence structure of finitely exchangeable distributions over random vectors and random networks. In particular, we provide necessary and sufficient conditions for an exchangeable vector so that its elements are completely independent or completely dependent. We also provide a sufficient condition for an exchangeable vector so that its elements are marginally independent. We then generalize these results and conditions for exchangeable random networks. In this case, it is demonstrated that the situation is more complex. We show that the independence structure of exchangeable random networks lies in one of six regimes that are two-fold dual to one another, represented by undirected and bidirected independence graphs in graphical model sense with graphs that are complement of each other. In addition, under certain additional assumptions, we provide necessary and sufficient conditions for the exchangeable network distributions to be faithful to each of these graphs.
\end{abstract}


\begin{keyword}
\kwd{conditional independence}
\kwd{exchangeability}
\kwd{faithfulness}
\kwd{random networks}
\end{keyword}

\end{frontmatter}

\section{Introduction}
The concept of exchangeability has been a natural and convenient assumption to impose in probability theory and for simplifying statistical models. As defined originally for random sequences, it states that any order of a finite number of samples is equally likely. The concept was later generalized for binary random arrays \citep{ald85}, and consequently for random networks in statistical network analysis. In this context, exchangeability is translated into invariance under relabeling
of the nodes of the network, whereby isomorphic graphs have the
same probabilities; see, e.g., \cite{lau17}.

Exchangeability is closely related to the concept of independent and identically distributed random variables. It is an immediate consequence of the definition that independent and identically distributed random variables are exchangeable, but the converse is not true. For infinite sequences, the converse is established by the well-known deFinetti's Theorem \citep{def31}, which implies that in any infinite sequence of exchangeable random variables, the random variables are conditionally independent and identically-distributed given the underlying distributional form. Other versions of deFinetti's Theorem exist for the generalized definitions of exchangeability \citep{sil76,dia08}.

However, for finitely exchangeable random sequences (vectors) and arrays (matrices), the converse does not hold, and the available results basically provide approximations of the infinite case; see, e.g., \cite{dia80,mat95,lau19}. In this paper, we utilize a completely different approach to study the relationship between finite exchangeability and (conditional) independence.  We employ the theory of graphical models (see e.g.\ \cite{lau96}) in order to provide the independence structure of exchangeable distributions.

In particular, we exploit the necessary and sufficient conditions, provided in \cite{sad17}, for faithfulness of probability distributions and graphs, which determine when the conditional independence structure of the distribution is exactly the same as that of a graph in graphical model sense. Thus, we, in practice, work on the induced independence model of an exchangeable probability distribution rather than the distribution itself. The specialization to finitely exchangeable random vectors leads to  necessary and sufficient conditions for an exchangeable vector to be completely independent or completely dependent, meaning that two elements of the vector are conditionally independent or dependent, respectively, given any subset of the remaining elements of the vector. These conditions are namely intersection and composition properties \cite{pea88,stu05}. We also use the results to provide a sufficient condition for an exchangeable vector to be marginally independent.

For random networks, we follow a similar procedure. It was shown in \cite{lau17} that if a distribution over an exchangeable random network could be faithful to a graph then the skeleton of the graph is only one of the four possible types: the empty graph, the complete graph, and the, so-called incidence graph, and its complement (see Proposition \ref{prop:7}). A question is then which graphs that emerge from these skeleta are faithful to the exchangeable distribution. We show that, other than complete independence or dependence, there are four other independence structures that can arise for exchangeable networks. These are faithful to a pair of dual graphs with incidence graph skeleton, and  a pair of dual graphs with the complement of the incidence graph skeleton. We then provide assumptions under which intersection and composition properties are necessary and sufficient for the exchangeable random networks to be faithful to each of these six cases.

The structure of the paper is as follows: In the next section, we provide some definitions and known results needed for this paper from graph theory, random networks, and graphical models. In Section \ref{sec:3}, we provide the results for exchangeable random vectors.  In Section \ref{sec:4}, we provide the results for exchangeable random networks by starting with providing the aforementioned possible cases in Theorem \ref{thm:2}, and then introducing them in separate subsections. We end with a short discussion on these results in Section \ref{sec:5}. We will provide technical definitions and results needed for the proof of Theorem \ref{thm:2}, and its proof, in Appendix \ref{app}.
\section{Definitions and preliminary results}\label{sec:2}
\subsection{Graph-theoretic concepts}
A \emph{(labeled) graph} is an ordered pair $G = (V,E)$ consisting of a
\emph{vertex} set $V$, which is non-empty and finite, an \emph{edge} set $E$, and a relation that with
each edge associates two vertices, called
its \emph{endpoints}. We omit the term labeled in this paper since the context does not give rise to ambiguity.
 When vertices $u$ and $v$ are the endpoints of an
edge, these are
\emph{adjacent} and we write $u\sim v$; we denote the corresponding edge as
$uv$.

In this paper, we will restrict our attention to \emph{simple} graphs, i.e.\ graphs without
loops (this assumption means that the
endpoints of each edge are distinct) or multiple edges (each pair of vertices are the endpoints
of at most one edge). Further, three \emph{types} of edges, denoted by
\emph{arrows}, \emph{arcs} (solid lines with two-headed arrows) and \emph{lines}
(solid lines without arrowheads) have been used in the literature of graphical models.  Arrows can be represented by ordered pairs of vertices, while arcs
and lines by $2$-subsets of the vertex set. However, for our purpose, except for Section \ref{sec:41}, we will distinguish only lines and arcs, which  respectively form
\emph{undirected} and \emph{bidirected} graphs.

The graphs $F=(V_F,E_F)$ and $G=(V_G,E_G)$ are considered equal if and only
if $(V_F,E_F)=(V_G,E_G)$.

A \emph{subgraph} of a graph $G = (V_G,E_G)$ is graph $F = (V_F,E_F)$ such that
$V_F\subseteq V_G$ and $E_F\subseteq E_G$ and the assignment of endpoints to
edges in $F$
is the same as in $G$.


The \emph{line graph} $L(G)$ of a graph $G=(V,E)$ is the intersection graph of
the edge set $E$, i.e.\ its vertex set is $E$ and $e_1\sim e_2$ if and only if
$e_1$ and $e_2$ have a common endpoint \citep[p.\ 168]{wes01}. We will in
particular be interested in the line graph of a complete graph, which we will
refer to as the \emph{incidence graph}. Figure~\ref{fig:2} displays the incidence graph for $V = \{1,2,3,4\}$.
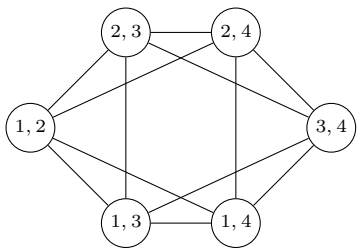
\begin{figure}[htb]
\centering
\begin{tikzpicture}[node distance = 8mm and 8mm, minimum width = 6mm]
    \begin{scope}
      \tikzstyle{every node} = [shape = circle,
      font = \scriptsize,
      minimum height = 6mm,
      inner sep = 2pt,
      draw = black,
      fill = white,
      anchor = center],
      text centered]
      \node(12) at (0,0) {$1,2$};
      \node(13) [below right = of 12] {$1,3$};
      \node(23) [above right = of 12] {$2,3$};
			\node(24) [right = of 23] {$2,4$};
      \node(14) [right = of 13] {$1,4$};
      \node(34) [below right = of 24] {$3,4$};
			
    \end{scope}

    \begin{scope}
      \draw (12) -- (13);
			\draw (12) -- (14);
			\draw (12) -- (23);
			\draw (12) -- (24);
      \draw (13) -- (23);
			\draw (13) -- (14);
			\draw (13) -- (34);
			\draw (23) -- (24);
			\draw (23) -- (34);
			\draw (14) -- (24);
			\draw (24) -- (34);\draw (14) -- (34);

    \end{scope}

%
		

    \end{tikzpicture}
		\caption{\small{The incidence graph for $V=
		    \{1,2,3,4\}$.}}
     \label{fig:2}
		\end{figure}
We denote by $L_{-}(n)$ and $L_{\leftrightarrow}(n)$, the undirected incidence graph and the bidirected incidence graph for $n$ nodes, respectively; and by  $L^c_{-}(n)$ and $L^c_{\leftrightarrow}(n)$, the undirected complement of the incidence graph and the bidirected complement of the incidence graph for $n$ nodes, respectively, where the \emph{complement} of graph $G$ refers to a graph with the same node set as $G$, but with the edge set that is the complement set of the edge set of $G$.

The \emph{skeleton} of a graph is the undirected graph where all arrowheads are removed from the graphs, i.e., all edges are replaced by lines. We denote the skeleton of a graph $G$ by $\sk(G)$.

A \emph{walk} $\omega$ is a list $\omega=\langle i_0,e_1,i_1,\dots,e_n,i_n\rangle$ of vertices and edges such that for $1\leq m\leq n$, the edge $e_m$ has endpoints $i_{m-1}$ and $i_m$. When indicating a specific walk, we may skip the edges, and only write the nodes, when the walk we are considering is clear from the context. A \emph{path} is a walk with no repeated nodes.
\subsection{Random networks}
Given a finite node set $\nodeset$ ---
representing \emph{individuals} or \emph{actors} in a given population of
interest ---  we define a
\emph{random network} over $\nodeset$ to be a collection $X = (X_d, d\in
\mathcal{D}(\nodeset) )$
of binary random variables taking values $0$ and $1$ indexed by a set
$\mathcal{D}(\nodeset)$, which is a collection of unordered pairs $ij$ of nodes in $\nodeset$. The binary random variables $X_d$ are called \emph{dyads}, and
nodes $i$ and $j$ are said to have
a \emph{tie}  if the random variable $X_{ij}$ takes the value $1$, and no tie
otherwise. 
Thus, a random network is a random
variable taking value in $\{0,1\}^{\nodeset \choose 2}$ and can, therefore,
be seen as a random {\it simple, undirected} graph with node set $\nodeset$,
whereby the ties form the random edges of the graphs.

We use the terms
network, node, and tie rather than graph, vertex, and edge to differentiate
from the terminology used in the graphical model sense. Indeed, as we shall
discuss graphical models for networks, we will also consider each dyad $d$ as a
vertex in a graph $G=(\mathcal{D},E)$ representing the dependence structure of the random
variables associated with the dyads, with the edge set of such graph
representing Markov properties of the distribution of $X$.
\subsection{Probabilistic independence models and their properties}\label{sec:propind}
An \emph{independence model} $\mathcal{J}$ over a finite set $V$ is a set of triples $\langle X,Y\cd Z\rangle$ (called \emph{independence statements}), where $X$, $Y$, and $Z$ are disjoint subsets of $V$; $Z$
may be empty, but $\langle \varnothing,Y\cd Z\rangle$ and $\langle X,\varnothing\cd Z\rangle$ are always included in $\mathcal{J}$. The independence statement $\langle X,Y\cd Z\rangle$ is read as ``$X$ is independent of $Y$ given $Z$''. Independence models may in  general have a  probabilistic interpretation, but not necessarily. Similarly, not all independence models can be easily represented by graphs. For further discussion on general independence models, see \cite{stu05}.

In order to define probabilistic independence models, consider a set $V$ and a collection of random variables
$\{X_\alpha\}_{\alpha\in V}$ with state spaces $\mathcal{X}_\alpha, \alpha\in V$ and joint distribution $P$. We let $X_A=\{X_v\}_{v\in A}$ etc.\ for each subset $A$ of $V$. For disjoint subsets $A$, $B$, and $C$ of $V$
we use a short notation $A\cip B\cd C$ to denote that $X_A$ is \emph{conditionally independent of $X_B$ given $X_C$} \cite{daw79,lau96}, i.e.\ that for any measurable $\Omega\subseteq \mathcal{X}_A$ and $P$-almost all $x_B$ and $x_C$,
$$P(X_A \in \Omega\cd X_B=x_B, X_C=x_C)=P(X_A \in \Omega\cd X_C=x_C).$$
We can now induce an independence model $\mathcal{J}(P)$ by letting
\begin{displaymath}
\langle A,B\cd C\rangle\in \mathcal{J}(P) \text{ if and only if } A\ci B\cd C \text{ w.r.t.\ $P$}.
\end{displaymath}
Similarly we use the notation $A\notci B\cd C$ for $\langle A,B\cd C\rangle\notin \mathcal{J}(P)$.

We say that non-empty $A$ and $B$ are \emph{completely independent} if, for every $C\subseteq V\setminus (A\cup B)$, $A\ci B\cd C$. Similarly, we say that $A$ and $B$ are \emph{completely dependent} if, for any $C\subseteq V\setminus (A\cup B)$, $A\notci B\cd C$.

If $A$, $B$, or $C$ has only one member $\{u\}$, $\{v\}$, or $\{w\}$, for better readability, we write $u\ci v\cd w$. We also write $A\ci B$ when $C=\varnothing$, which denotes the \emph{marginal independence} of $A$ and $B$.

A probabilistic independence model $\mathcal{J}(P)$ over a set $V$ is always a \emph{semi-graphoid} \cite{pea88}, i.e., it satisfies the four following properties for disjoint subsets $A$, $B$, $C$, and $D$ of $V$:
 \begin{enumerate}
    \item $A\ci B\cd C$ if and only if $B\ci A\cd C$ (\emph{symmetry});
    \item if $A\ci B\cup D\cd C$ then $A\ci B\cd C$ and $A\ci D\cd C$ (\emph{decomposition});
    \item if $A\ci B\cup D\cd C$ then $A\ci B\cd C\cup D$ and $A\ci D\cd C\cup B$ (\emph{weak union});
    \item if $A\ci B\cd C\cup D$ and $A\ci D\cd C$ then $A\ci B\cup D\cd C$ (\emph{contraction}).
 \end{enumerate}
Notice that the reverse implication of contraction clearly holds by decomposition and weak union. A semi-graphoid for which the reverse implication of the weak union property holds is said to be a \emph{graphoid}; that is, it also satisfies
\begin{itemize}
	\item[5.] if $A\ci B\cd C\cup D$ and $A\ci D\cd C\cup B$ then $A\ci B\cup D\cd C$ (\emph{intersection}).
\end{itemize}
Furthermore, a graphoid or semi-graphoid for which the reverse implication of the decomposition property holds is said to be \emph{compositional}, that is, it also satisfies
\begin{itemize}
	\item[6.] if $A\ci B\cd C$ and $A\ci D\cd C$ then $A\ci B\cup D\cd C$ (\emph{composition}).
\end{itemize}
If, for example, $P$ has strictly positive density, the induced probabilistic independence model is always a graphoid; see e.g.\ Proposition 3.1 in \cite{lau96}. See also \cite{pet15} for a necessary and sufficient condition for $P$ in order for the intersection property to hold. If the distribution $P$ is a regular multivariate Gaussian distribution, $\mathcal{J}(P)$ is a compositional graphoid; e.g.\ see \cite{stu05}.
Probabilistic independence models with positive densities are not in general compositional; this only holds for special types of multivariate distributions such as, for example,  Gaussian distributions and the symmetric binary distributions used in \cite{wer09}.

Another important property that is not necessarily satisfied by probabilistic independence models is \emph{singleton-transitivity} (also called \emph{weak transitivity} in \cite{pea88}, where it is shown that for Gaussian and binary distributions $P$, $\mathcal{J}(P)$ always satisfies it). For $u$, $v$, and $w$, single elements in $V$,
 \begin{itemize}
	\item[7.] if $u\ci v\cd C$ and $u\ci v\cd C\cup\{w\}$ then $u\ci w\cd C$ or $v\ci w\cd C$ (\emph{singleton-transitivity}).
\end{itemize}
In addition, we have the two following properties:
 \begin{itemize}
\item[8.] if $u\ci v\cd C$ then $u\ci v\cd C\cup\{w\}$ for every $w\in V\setminus \{u,v\}$ (\emph{upward-stability});
\item[9.] if $u\ci v\cd C$ then $u\ci v\cd C\setminus\{w\}$ for every $w\in V\setminus \{u,v\}$ (\emph{downward-stability}).
\end{itemize}
Henceforth, instead of saying that ``$\mathcal{J}(P)$ satisfies these properties'', we simply say that ``$P$ satisfies these properties''. First we provide the following well-known result \cite{sad17}:
\begin{lemma}\label{lem:1}
For a probability distribution $P$ the following holds:
\begin{enumerate}
  \item If $P$ satisfies upward-stability then $P$ satisfies composition.
  \item If $P$ satisfies downward-stability then $P$ satisfies intersection.
\end{enumerate}
\end{lemma}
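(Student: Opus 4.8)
The plan is to treat the two parts as duals of one another and to reduce each to a single application of the contraction axiom. The obstacle is that composition and intersection are stated for arbitrary disjoint sets, whereas upward- and downward-stability (properties 8 and 9) are assumed only for single elements; hence the heart of the argument is to lift each stability property from singletons to sets, after which contraction finishes the job. Throughout I would use freely that $\mathcal{J}(P)$ is a semi-graphoid, i.e.\ that symmetry, decomposition, weak union, and contraction are available.

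First I would establish a set-valued form of upward-stability: if $A\ci B\cd C$ then $A\ci B\cd C\cup\{w\}$ for every $w\notin A\cup B\cup C$. I would prove this by induction on $|A|+|B|$, the base case $|A|=|B|=1$ being exactly property 8. For the inductive step, assume without loss of generality (by symmetry) that $|B|\geq 2$, pick $b\in B$, and set $B'=B\setminus\{b\}$. From $A\ci B\cd C$, weak union gives $A\ci B'\cd C\cup\{b\}$ and decomposition gives $A\ci\{b\}\cd C$. The induction hypothesis applied to each of these (both have strictly smaller size parameter) yields $A\ci B'\cd C\cup\{b,w\}$ and $A\ci\{b\}\cd C\cup\{w\}$, and contraction then recombines them into $A\ci B\cd C\cup\{w\}$. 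The set-valued form of downward-stability, namely that $A\ci B\cd C$ implies $A\ci B\cd C\setminus\{w\}$, is obtained by the identical induction with property 9 in place of property 8 and with $w$ removed rather than added at each stage.

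With the lifted statements in hand, both parts are immediate. For part 1, given $A\ci B\cd C$ and $A\ci D\cd C$, I would apply set upward-stability repeatedly to move the elements of $D$ into the conditioning set, obtaining $A\ci B\cd C\cup D$; contraction with $A\ci D\cd C$ then gives $A\ci B\cup D\cd C$, which is composition. For part 2, given $A\ci B\cd C\cup D$ and $A\ci D\cd C\cup B$, I would apply set downward-stability to the second statement to strip $B$ from the conditioning set, obtaining $A\ci D\cd C$; contraction with the first statement again yields $A\ci B\cup D\cd C$, which is intersection.

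The only delicate point is the lifting induction. One must check that the size parameter $|A|+|B|$ strictly decreases for both auxiliary statements (which it does precisely because the chosen set is assumed to have at least two elements, so that $|A|+|B'|<|A|+|B|$ and $|A|+1<|A|+|B|$), and that the disjointness and membership side-conditions, such as $w\notin A\cup B'\cup(C\cup\{b\})$ and $w\neq b$, remain satisfied at each step. Once these bookkeeping conditions are verified, everything else is a direct invocation of the semi-graphoid axioms, which hold for every probabilistic independence model $\mathcal{J}(P)$.
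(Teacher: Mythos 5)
Your proof is correct. The paper itself states Lemma 2.1 without proof, citing \cite{sad17}, so there is no in-paper argument to compare against; but your derivation is a valid self-contained one, and it is essentially the standard route: lift the singleton stability property to sets by induction on $|A|+|B|$ using weak union, decomposition, and contraction, then obtain composition (resp.\ intersection) by one further application of contraction. The induction bookkeeping checks out --- both auxiliary statements have strictly smaller size parameter because $|B|\geq 2$, and the element $w$ being added (or removed) stays disjoint from $A\cup B'\cup\{b\}$ throughout. The one stylistic difference worth noting is that the paper later remarks (end of Section 2.6) that part 2 of the lemma can be deduced from part 1 via the duality of Lemma \ref{lem:dual}, since duality swaps upward- with downward-stability and composition with intersection; you instead prove part 2 by running the mirror-image induction directly. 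Both are fine; the duality route saves repeating the induction, while your direct argument does not rely on Lemma \ref{lem:dual} at all.
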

\subsection{Exchangeability for random vectors and networks}
A probability distribution $P$ over  a finite vector $(X_1, X_2, X_3, \dots, X_n)$ of random variables with the
same shared sample space is \emph{(finitely) exchangeable} if for any permutation $\pi\in S(n)$ of the indices $1, 2, 3, \dots, n$, the probability distribution of the permuted vector $(X_{\pi(1)},X_{\pi(2)},X_\pi{(3)},\dots,X_{\pi(n)})$ is the same as $P$; see \cite{ald85}. We shall for brevity say that the sequence $X$ is exchangeable in the meaning that its distribution is.

We are also concerned with probability distributions on networks that
are finitely exchangeable. A distribution $P$ of a random  matrix $X=(X_{ij})_{i,j\in \nodeset}$  over a finite node set $\nodeset$ with the
same shared sample space is said to be \emph{ (finitely) weakly exchangeable} \citep{sil76,eag78} if  for all permutations $\pi\in S(\nodeset)$ we have that
\begin{equation}
    \mathbb{P}\{(X_{ij}=x_{ij})_{i,j\in \nodeset}\}=\mathbb{P}\{(X_{ij}=x_{\pi(i)\pi(j)})_{i,j\in \nodeset}\}.
\end{equation}
If the matrix $X$ is symmetric --- i.e.\ $X_{ij}=X_{ji}$, we say it is \emph{symmetric weakly exchangeable}. Again, we shall for brevity say that $X$ is weakly or symmetric weakly exchangeable in the meaning that its distribution is.

A symmetric binary array with zero diagonal can be interpreted as a matrix of ties (the \emph{adjacency matrix}) of a random network and, thus, the above concepts can be translated into networks. A random network is \emph{exchangeable} if its adjacency matrix is symmetric weakly exchangeable.  Then it is easy to observe that
 a random network is exchangeable if and only if its distribution is invariant
 under relabeling of the nodes of the network.
\subsection{Undirected and bidirected graphical models}\label{sec:23}

Graphical models \citep[see, e.g.][]{lau96} are statistical models expressing
conditional independence statements among a collection of random variables $X_V = (X_v, v
\in V)$ indexed by a finite set $V$. A graphical model is determined by a graph
$G=(V,E)$ over the
indexing set $V$, and the edge set $E$ (which may include edges of undirected, directed
or bidirected type) encodes conditional independence
relations among the variables, or {\it Markov properties}. 

We say that $C$ \emph{separates} $A$ and $B$ in an undirected graph $G$, denoted by $A\perp_u B\cd C$, if every path between $A$ and $B$ has a vertex in $C$, that is there is no path between $A$ and $B$ outside $C$.  For a bidirected graph $G$, we say that $C$ \emph{separates} $A$ and $B$, denoted by $A\perp_b B\cd C$, if every path between $A$ and $B$ has a vertex outside $C\cup A\cup B$, that is there is no path between $A$ and $B$ within $A\cup B\cup C$.  Note the obvious duality between this and separation for undirected graphs. We might skip the subscripts $u$ and $b$ in $\perp_u$ and $\perp_b$ when it is apparent from the context with which separation we are dealing.

A joint probability
distribution $P$ for $X_V$ is \emph{Markovian} with respect to an undirected graph \citep{dar80} with the vertex set $V$ if $A\perp_u B\cd C$  implies $A\ci B\cd C$. $P$ is \emph{Markovian} with respect to a bidirected graph \citep{cox93,kau96} if $A\perp_b B\cd C$  implies $A\ci B\cd C$.


%



For example, in the undirected graph of Figure \ref{fig:001}(a), 
the global Markov property implies that $\{u,x\}\ci w \cd v$, whereas in the bidirected graph of Figure \ref{fig:001}(b), 
the global Markov property implies that $\{u,x\}\ci w$.

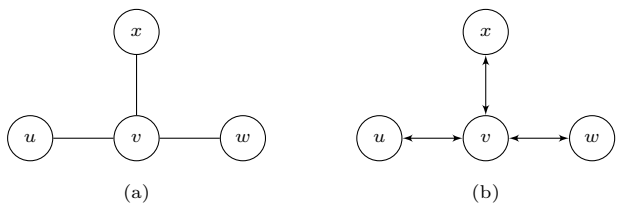
\begin{figure}[htb]
\centering
\begin{tikzpicture}[node distance = 8mm and 8mm, minimum width = 6mm]
    \begin{scope}
      \tikzstyle{every node} = [shape = circle,
      font = \scriptsize,
      minimum height = 6mm,
      inner sep = 2pt,
      draw = black,
      fill = white,
      anchor = center],
      text centered]
      \node(u) at (0,0) {$u$};
      \node(v) [right = of u] {$v$};
      \node(x) [above = of v] {$x$};
			\node(w) [right = of v] {$w$};
      \node(u1) [right = 12mm of w] {$u$};
      \node(v1) [right = of u1] {$v$};
      \node(x1) [above = of v1] {$x$};
			\node(w1) [right = of v1] {$w$};
    \end{scope}
		
    \begin{scope}
    \tikzstyle{every node} = [node distance = 6mm and 6mm, minimum width = 6mm,
    font= \scriptsize,
      anchor = center,
      text centered]
\node(a) [below = 2mm  of v]{(a)};
\node(b) [below = 2mm  of v1]{(b)};

\end{scope}
    \begin{scope}
      \draw (u) -- (v);
      \draw (v) -- (x);
			\draw (v) -- (w);
    \end{scope}
    \begin{scope}[<->, > = latex']
    \draw (u1) -- (v1);
      \draw (v1) -- (x1);
			\draw (v1) -- (w1);
    \end{scope}

    \end{tikzpicture}
		\caption{{\footnotesize (a) An undirected dependence graph. (b) a bidirected dependence graph.}}\label{fig:001}
		\end{figure}



If, for $P$ and undirected $G$, $A\perp_u B\cd C\iff A\ci B\cd C$ then we say that $P$ and $G$ are \emph{faithful}; Similarly, if, for $P$ and bidirected $G$, $A\perp_b B\cd C\iff A\ci B\cd C$ then $P$ and $G$ are \emph{faithful}. Hence, faithfulness implies being Markovian, but not the other way around.

For a given probability distribution $P$, we define the \emph{skeleton} of
$P$, denoted by $\sk(P)$, to be the undirected graph with the vertex set $V$ such
that vertices $u$ and $v$ are not adjacent if and only if there is \emph{some} subset $C$
of $V$ so that $u\cip v\cd C$.  Thus, if $P$ is Markovian with respect to an
undirected graph $G$ then $\sk(P)$ would be a subgraph of $G$ (since for every missing edge $ij$ in $G$, $i\ci j\cd V\setminus\{i,j\}$); and if $P$ is
Markovian with respect to a bidirected graph $G$ then $\sk(P)$ is a subgraph of $\sk(G)$ (since for every missing edge $ij$ in $G$, $i\ci j$).


In general, a graph $G(P)$ is induced by $P$ with skeleton $\sk(P)$. For undirected graphs, let $G_u(P)=\sk(P)$, whereas for bidirected graphs, let $G_b(P)$  be $\sk(P)$ with all edges being bidirected. We shall need the following results from \cite{sad17} (where the first part was first shown in \cite{pea85}):

\begin{prop}\label{prop:und}
Let $P$ be a probability distribution defined over $\{X_\alpha\}_{\alpha\in V}$. It then holds that
\begin{enumerate}
  \item $P$ and $G_u(P)$ are faithful if and only if $P$ satisfies intersection, singleton-transitivity, and upward-stability.
  \item $P$ and $G_b(P)$ are faithful if and only if $P$ satisfies composition, singleton-transitivity, and downward-stability.
\end{enumerate}
\end{prop}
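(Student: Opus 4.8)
The plan is to prove both parts at the level of single vertices and then lift to arbitrary disjoint sets, proving part~1 in detail and obtaining part~2 by the duality already visible in the definitions of $\perp_u$ and $\perp_b$. The key preliminary reduction is that, under either set of hypotheses, $P$ is a \emph{compositional graphoid}: in part~1 we assume intersection, and upward-stability yields composition by Lemma~\ref{lem:1}(1); in part~2 we assume composition, and downward-stability yields intersection by Lemma~\ref{lem:1}(2). Consequently, by decomposition and composition, for disjoint $A,B,C$ one has $A\ci B\cd C$ iff $a\ci b\cd C$ for all $a\in A$, $b\in B$. A matching set-reduction holds for separation: $A\perp_u B\cd C$ iff $a\perp_u b\cd C$ for all $a,b$, and likewise $A\perp_b B\cd C$ iff $a\perp_b b\cd C$ for all $a,b$ (the latter needs a short argument splicing a violating path at its last vertex in $A$ and its first subsequent vertex in $B$). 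Hence it suffices to establish, for all distinct $u,v$ and all $C\subseteq V\setminus\{u,v\}$, the singleton equivalence $u\perp_u v\cd C\iff u\ci v\cd C$ in $G_u(P)=\sk(P)$.

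For the \emph{if} direction of part~1, note first that $u\not\sim v$ in $\sk(P)$ means $u\ci v\cd C_0$ for some $C_0$, and repeated application of upward-stability gives $u\ci v\cd V\setminus\{u,v\}$; thus $P$ obeys the pairwise Markov property with respect to $\sk(P)$. Since $P$ is a graphoid, the classical equivalence of the pairwise and global Markov properties (e.g.\ \cite{lau96}) yields $A\perp_u B\cd C\Rightarrow A\ci B\cd C$, which is the \emph{soundness} (Markovian) half of faithfulness; this is where intersection does its work, by allowing the conditioning set to be reduced to $C$. For the \emph{completeness} half, $u\ci v\cd C\Rightarrow u\perp_u v\cd C$, I argue by induction on the length of a shortest path from $u$ to $v$ avoiding $C$. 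If the length is one then $u\sim v$, contradicting the existence of any $C$ with $u\ci v\cd C$. Otherwise let $w$ be the first interior vertex; upward-stability gives $u\ci v\cd C\cup\{w\}$, so singleton-transitivity applied to $u\ci v\cd C$ and $u\ci v\cd C\cup\{w\}$ forces $u\ci w\cd C$ or $v\ci w\cd C$. The former contradicts the edge $u\sim w$; the latter contradicts the induction hypothesis applied to the shorter subpath from $w$ to $v$ avoiding $C$. I expect this to be the delicate step and the crux of the whole statement, as it is exactly where singleton-transitivity (together with stability) is essential.

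For the \emph{only if} direction of part~1, faithfulness identifies $\mathcal{J}(P)$ with the separation model of the undirected graph $G_u(P)$, so it remains to check that undirected separation itself satisfies the three properties. Intersection holds because undirected separation is a graphoid; upward-stability holds because enlarging the conditioning set cannot create a connecting path; and singleton-transitivity holds because, were both $u\perp_u w\cd C$ and $v\perp_u w\cd C$ to fail, concatenating a $u$--$w$ path and a $w$--$v$ path avoiding $C$ would produce a $u$--$v$ walk avoiding $C$, contradicting $u\perp_u v\cd C$. Transporting these through the faithfulness equivalence shows that $P$ satisfies intersection, singleton-transitivity, and upward-stability.

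Part~2 is then obtained by dualizing: everywhere one replaces upward-stability by downward-stability, intersection by composition (again supplied by Lemma~\ref{lem:1}), and ``avoiding $C$'' by ``staying within $\{u,v\}\cup C$'', while singleton-transitivity is self-dual. Concretely, $u\not\sim v$ now yields the marginal statement $u\ci v$ after stripping $C_0$ via downward-stability, giving the bidirected pairwise Markov property, whence composition yields the bidirected global Markov property and hence soundness (the dual of the theorem cited above); completeness runs the same shortest-path induction, this time deleting an interior vertex $w\in C$ from the conditioning set and applying singleton-transitivity to $u\ci v\cd C\setminus\{w\}$ (from downward-stability) and $u\ci v\cd C$; and the three separation properties needed for the \emph{only if} direction are verified by the dual graph arguments (for singleton-transitivity one splices within $\{u,v\}\cup C\cup\{w\}$ and uses the second hypothesis instead of the first). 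The only point requiring genuine care in transcribing part~1 to part~2 is the set-reduction for $\perp_b$ noted in the first paragraph; everything else is a mechanical dualization.
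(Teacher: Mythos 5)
The paper does not prove this proposition at all: it is imported verbatim from \cite{sad17}, so there is no in-text proof to compare against. Your argument is correct, and it is in substance the argument of that reference. The two halves are exactly right: for soundness you pass from the skeleton's defining independence $u\ci v\cd C_0$ to the pairwise Markov statement $u\ci v\cd V\setminus\{u,v\}$ via upward-stability and then invoke the pairwise-to-global equivalence under intersection \cite{lau96}; for completeness the induction on a shortest $u$--$v$ path outside $C$, splitting at the first interior vertex $w$ with upward-stability supplying $u\ci v\cd C\cup\{w\}$ and singleton-transitivity forcing $u\ci w\cd C$ or $v\ci w\cd C$, is precisely the crux, and your handling of it (including the checks that $w\notin C\cup\{u,v\}$ and that the subpath shortens) is sound. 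The converse direction, reading intersection, upward-stability, and singleton-transitivity off the separation model of $G_u(P)$, is also correct. The one stylistic divergence from the paper is in part 2: the paper explicitly remarks that part 2 follows from part 1 by combining Lemma~\ref{lem:dual} (duality swaps intersection with composition and upward- with downward-stability, and preserves singleton-transitivity and the semi-graphoid axioms) with Lemma~\ref{lem:5} (duality swaps $\perp_u$ and $\perp_b$ on a common skeleton), which turns part 2 into a two-line corollary; you instead re-run the dual arguments by hand, which works but obliges you to supply the bidirected pairwise-to-global equivalence and the set-reduction for $\perp_b$ separately --- both of which the formal duality would have delivered for free. Your splicing argument for the $\perp_b$ set-reduction and your observation that bidirected singleton-transitivity uses the second premise $u\perp_b v\cd C\cup\{w\}$ rather than the first are both correct as stated.
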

\subsection{Duality in independence models and graphs}\label{sec:dual}
The \emph{dual} of an independence model $\mathcal{J}$ (defined in \cite{mat92} under the name of \emph{dual relation}) is the independence model defined by $\mathcal{J}^d=\{\langle A,B\cd V\setminus (A\cup B\cup C)\rangle:\nn\langle A,B\cd C\rangle\in \mathcal{J}\}$; see also \cite{drt10}. We will need the following lemma regarding the duality of independence models:
\begin{lemma}\label{lem:dual}
For an independence model  $\mathcal{J}$ and its dual $\mathcal{J}^d$,
\begin{enumerate}
  \item $\mathcal{J}$ is semi-graphoid if and only if $\mathcal{J}^d$ is semi-graphoid;
  \item $\mathcal{J}$ satisfies intersection  if and only if $\mathcal{J}^d$ satisfies composition; and vice versa;
  \item $\mathcal{J}$ satisfies singleton-transitivity if and only if $\mathcal{J}^d$ satisfies singleton-transitivity;
  \item  $\mathcal{J}$ satisfies upward-stability  if and only if $\mathcal{J}^d$ satisfies downward-stability; and vice versa.
\end{enumerate}
\end{lemma}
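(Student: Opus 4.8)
\emph{Plan.} The engine of the whole lemma is that duality acts on individual triples as an involution. For a triple $t=\langle A,B\cd C\rangle$ write $t^d=\langle A,B\cd V\setminus(A\cup B\cup C)\rangle$; since $V\setminus\bigl(A\cup B\cup(V\setminus(A\cup B\cup C))\bigr)=C$, we have $(t^d)^d=t$, so $t\mapsto t^d$ is an involution on triples and $(\mathcal{J}^d)^d=\mathcal{J}$. Consequently
$$\langle A,B\cd C\rangle\in\mathcal{J}\iff\langle A,B\cd V\setminus(A\cup B\cup C)\rangle\in\mathcal{J}^d,$$
and this is a genuine biconditional. The plan is to read each property of $\mathcal{J}^d$ as a universally quantified implication between $\mathcal{J}^d$-memberships, replace every membership by the equivalent $\mathcal{J}$-membership through the displayed correspondence, and identify the resulting statement about $\mathcal{J}$ with one of the named properties. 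Because each replacement is itself an ``iff'', every such identification is automatically a biconditional; and because the operation is an involution, running the same argument with $\mathcal{J}^d$ in place of $\mathcal{J}$ yields the ``and vice versa'' clauses for free. Thus it suffices to translate one property at a time.

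For the bookkeeping I would fix the partition $V=A\cup B\cup C\cup D\cup E$ induced by a given instance and track that complementation within $V\setminus(A\cup B)$ interchanges the conditioning block $C$ with the block $D\cup E$ (or with $E$, etc.). Carrying this out gives, for Part 1, the correspondences symmetry $\leftrightarrow$ symmetry, decomposition $\leftrightarrow$ weak union, and contraction $\leftrightarrow$ contraction; since the four semi-graphoid axioms are thereby permuted among themselves, $\mathcal{J}^d$ satisfies all four exactly when $\mathcal{J}$ does. The same mechanical translation turns the composition axiom for $\mathcal{J}^d$ into precisely the intersection axiom for $\mathcal{J}$ (Part 2), and the upward-stability axiom for $\mathcal{J}^d$ into precisely the downward-stability axiom for $\mathcal{J}$ (Part 4). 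These three parts require nothing beyond the involution and are pure axiom-relabelings.

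Part 3 is the genuine obstacle, and I expect it to be the only step that is not a mere relabeling. Translating singleton-transitivity for $\mathcal{J}^d$ produces an implication whose \emph{premises} coincide with those of singleton-transitivity for $\mathcal{J}$ --- namely $u\ci v\cd D$ and $u\ci v\cd D\cup\{w\}$ --- but whose \emph{conclusion} is the shifted disjunction $u\ci w\cd D\cup\{v\}$ or $v\ci w\cd D\cup\{u\}$, rather than $u\ci w\cd D$ or $v\ci w\cd D$. So the dual of singleton-transitivity is not singleton-transitivity verbatim, and for arbitrary independence models the two are genuinely inequivalent. I would bridge the gap using the semi-graphoid structure, which is legitimate here because it is the setting of interest and, by Part 1, is shared by $\mathcal{J}$ and $\mathcal{J}^d$: apply ordinary singleton-transitivity to obtain, say, $u\ci w\cd D$; combine it with the premise $u\ci v\cd D\cup\{w\}$ by contraction to get $u\ci\{v,w\}\cd D$; and finish with weak union to reach $u\ci w\cd D\cup\{v\}$, the required disjunct (the case $v\ci w\cd D$ is symmetric and yields $v\ci w\cd D\cup\{u\}$). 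This establishes singleton-transitivity for $\mathcal{J}^d$, and the involution supplies the converse. The delicate point to get right is exactly this contraction-plus-weak-union manoeuvre, together with the observation that the semi-graphoid hypothesis is indispensable: without it one can exhibit a (non-semi-graphoid) $\mathcal{J}$ satisfying singleton-transitivity whose dual does not.
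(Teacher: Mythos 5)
Your proof is correct, and it is self-contained where the paper is not: the paper proves only part 4 directly (by exactly your relabelling argument, adding $k$ to the dual conditioning set via upward-stability) and disposes of parts 1--3 by citing \cite{lne07} and \cite{mal13}, which establish closure of singleton-transitive compositional graphoids under duality. Your axiom-by-axiom translation (symmetry $\mapsto$ symmetry, decomposition $\mapsto$ weak union, contraction $\mapsto$ contraction, composition $\mapsto$ intersection, upward-stability $\mapsto$ downward-stability) checks out, and the involution $(\mathcal{J}^d)^d=\mathcal{J}$ does indeed deliver all the ``vice versa'' clauses. What your route buys is the isolation of the one non-trivial step: you correctly observe that the dual of singleton-transitivity has the shifted conclusion $u\ci w\cd D\cup\{v\}$ or $v\ci w\cd D\cup\{u\}$, and that bridging it to the unshifted disjunction needs contraction and weak union; your claim that the semi-graphoid hypothesis is indispensable is also right (on $V=\{u,v,w\}$, the model $\mathcal{J}=\{\langle u,v\cd\varnothing\rangle,\langle u,v\cd\{w\}\rangle,\langle u,w\cd\varnothing\rangle\}$ is singleton-transitive but its dual is not). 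So part 3, read literally for an arbitrary independence model as the lemma is phrased, is false; your proof makes explicit the semi-graphoid hypothesis under which it holds, whereas the paper's cited sources assume even more (a compositional graphoid). Since the lemma is only ever applied to $\mathcal{J}(P)$ for a probability distribution $P$, which is always a semi-graphoid, this does not affect anything downstream, but your version is the sharper statement. A minor economy: in part 3 the two disjuncts are exchanged by the symmetry $u\leftrightarrow v$, so one case suffices.
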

\begin{proof}
1., 2., and 3.\ are proven in \cite{lne07}, which showed that if  $\mathcal{J}$ is singleton-transitive compositional graphoid, so is the dual couple of $\mathcal{J}$ (although this is proven based on the so-called \emph{Gaussoid} formulation). An alternative proof for these,  with the same formulation as in this paper, is found in \cite{mal13}. (In fact the statement in the mentioned article was proven for a generalization of singleton-transitivity, called \emph{dual decomposable transitivity}).

In order to prove 4., suppose that $\mathcal{J}$ satisfies upward-stability, and assume that $\langle i,j\cd C\rangle\in \mathcal{J}^d$ and $k\in C$. Therefore, $\langle i,j\cd V\setminus (\{i,j\}\cup C)\rangle\in \mathcal{J}$. Hence, $\langle i,j\cd V\setminus (\{i,j\}\cup C)\cup\{k\}\rangle\in \mathcal{J}$ because of upward-stability. Therefore  $\langle i,j\cd C\setminus\{k\}\rangle\in \mathcal{J}^d$, which implies downward-stability of $\mathcal{J}^d$. The other direction is similar.
\end{proof}
As an additional statement to the lemma, it can also be shown that $\mathcal{J}$ is closed under \emph{marginalization} if and only if $\mathcal{J}^d$ is closed under \emph{conditioning}; and vice versa; but this result is not needed in this paper.

In addition, for separation in graphs, we will use the following lemma related to duality:
\begin{lemma}\label{lem:5}
Let $G_u$ and $G_b$ be an undirected and a bidirected graph such that $\sk(G_u)=\sk(G_b)$. Then, the independence model $\mathcal{J}(G_b)$ induced by $G_b$ is $\mathcal{J}^d(G_u)$ and vice versa.
\end{lemma}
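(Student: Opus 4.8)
The plan is to prove the two separation criteria are dual to one another statement-by-statement, by directly unwinding the definitions of $\perp_u$ and $\perp_b$ and the definition of the dual relation $\mathcal{J}^d$. Recall that $\mathcal{J}(G_u)=\{\langle A,B\cd C\rangle: A\perp_u B\cd C\}$ and $\mathcal{J}(G_b)=\{\langle A,B\cd C\rangle: A\perp_b B\cd C\}$, and that by definition $\mathcal{J}^d(G_u)=\{\langle A,B\cd V\setminus(A\cup B\cup C)\rangle:\langle A,B\cd C\rangle\in\mathcal{J}(G_u)\}$. So it suffices to show the single equivalence
\begin{equation}\label{eq:dualsep}
A\perp_b B\cd C \iff A\perp_u B\cd V\setminus(A\cup B\cup C),
\end{equation}
since $\langle A,B\cd C\rangle\in\mathcal{J}^d(G_u)$ is exactly the statement that $A\perp_u B\cd V\setminus(A\cup B\cup C)$ in $G_u$. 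Here I use that $\sk(G_u)=\sk(G_b)$, so ``path between $A$ and $B$'' means the same thing in both graphs — only the separating conditions differ.

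First I would recall the two separation definitions on the common skeleton. For the undirected side with conditioning set $S:=V\setminus(A\cup B\cup C)$, the statement $A\perp_u B\cd S$ says every path between $A$ and $B$ meets $S$, equivalently there is no path between $A$ and $B$ avoiding $S$, i.e.\ no path lying inside $V\setminus S=A\cup B\cup C$. For the bidirected side, $A\perp_b B\cd C$ says every path between $A$ and $B$ has an inner vertex outside $A\cup B\cup C$, equivalently there is no path between $A$ and $B$ all of whose vertices lie in $A\cup B\cup C$. These two ``no path'' descriptions are literally the same condition: there is no path between $A$ and $B$ contained in $A\cup B\cup C$. This establishes \eqref{eq:dualsep} and hence $\mathcal{J}(G_b)=\mathcal{J}^d(G_u)$.

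For the converse direction, $\mathcal{J}(G_u)=\mathcal{J}^d(G_b)$, I would note that dualization is an involution on independence statements, since $V\setminus(A\cup B\cup(V\setminus(A\cup B\cup C)))=C$ whenever $A,B,C$ are disjoint; thus $(\mathcal{J}^d)^d=\mathcal{J}$. Applying this to $\mathcal{J}(G_b)=\mathcal{J}^d(G_u)$ gives $\mathcal{J}^d(G_b)=(\mathcal{J}^d(G_u))^d=\mathcal{J}(G_u)$, as required, so no separate argument is needed.

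The only subtlety I anticipate — and the step worth writing carefully — is the bookkeeping around endpoints versus inner vertices and the disjointness hypotheses, which ensure $A\cup B\cup C$ and its set-complement partition $V$ correctly. In particular, one must be slightly careful that in the bidirected definition the offending vertex is an \emph{inner} vertex of the path (a path's endpoints lie in $A$ and $B$ by definition), so ``a path within $A\cup B\cup C$'' is the correct negation; but since the endpoints are always in $A\cup B\subseteq A\cup B\cup C$ anyway, the two formulations coincide and the equivalence \eqref{eq:dualsep} goes through cleanly.
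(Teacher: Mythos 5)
Your proposal is correct and rests on the same observation as the paper's proof: a path between $A$ and $B$ avoiding $S=V\setminus(A\cup B\cup C)$ is exactly a path lying inside $A\cup B\cup C$, which is the negation of both $A\perp_u B\cd S$ and $A\perp_b B\cd C$. The only cosmetic difference is that the paper first reduces to singleton $A,B$ via the composition property of graph separation before comparing connecting paths, whereas you handle general sets directly and dispatch the converse by noting that dualization is an involution; both routes are fine.
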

\begin{proof}
Since the separation satisfies the composition property, it is sufficient to prove the statement for singletons. Suppose that there is a connecting path $\omega$ between $i$ and $j$ given $C$ in $G_u$. This means that no inner vertex of $\omega$ is in $C$; thus they are all in $V\setminus (\{i,j\}\cup C)$. Therefore, in $G_b$, $i$ and $j$ are connecting given $V\setminus (\{i,j\}\cup C)$. The other direction (where $i\notdse j\cd C$ in $G_b\Rightarrow i\notdse j\cd V\setminus (A\cup B\cup C)$ in $G_u$) is proven in a similar way.
\end{proof}

In fact, the second part of Proposition \ref{prop:und}, could be implied by the first part, and vice versa, using Lemmas \ref{lem:dual} and \ref{lem:5}; and the second part of Lemma \ref{lem:1}, could be implied by the first part, and vice versa, using Lemma \ref{lem:dual}.
\section{Results for vector exchangeability}\label{sec:3}
We shall study the relationship between vector exchangeability and conditional independence by using the definitions and results in the previous section. In the entire section, we assume that $P$ is a probability distribution defined over the vector $(X_v)_{v\in V}$. First, notice that exchangeability is closed under marginalization and conditioning:
\begin{prop}\label{prop:3n}
If $X_V$ is an exchangeable random vector then so are the marginal vectors $X_A$, for $A\subseteq V$, and the conditional vectors $X_A\cd X_C=x^*_C$, for disjoint $A,C$ where $V=A\cup C$, if they exist.
\end{prop}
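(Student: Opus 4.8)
The plan is to reduce both claims to the defining symmetry of the joint law together with one elementary device: any permutation of the index set of the sub-vector extends to a permutation of $V$ that fixes all remaining coordinates pointwise. It is convenient first to record exchangeability in its symmetric-function form. Writing $p$ for the joint mass function (or density) of $X_V$, the requirement that $(X_{\pi(1)},\dots,X_{\pi(n)})$ have the same law as $X_V$ for every $\pi\in S(n)$ is equivalent, after relabelling the arguments, to
\begin{equation*}
p(x_1,\dots,x_n)=p(x_{\pi(1)},\dots,x_{\pi(n)})\quad\text{for every }\pi\in S(V),
\end{equation*}
i.e.\ $p$ is invariant under permuting its arguments. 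I would establish this equivalence at the outset, since both parts of the proposition then follow by direct computation.

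For the marginal claim, fix $A\subseteq V$ and an arbitrary permutation $\sigma$ of the index set $A$. Extend $\sigma$ to $\pi\in S(V)$ by letting $\pi$ act as $\sigma$ on $A$ and as the identity on $V\setminus A$. Because $\pi$ fixes every coordinate outside $A$, applying $\pi$ to a configuration only permutes the $A$-block and leaves the $(V\setminus A)$-block untouched. I would then obtain the $A$-marginal by summing (integrating) $p$ over the $V\setminus A$ coordinates, substitute the permuted argument, recognise the integrand as $p$ evaluated at a $\pi$-permuted configuration, and invoke the symmetry of $p$ to conclude that the sum over $V\setminus A$ is unchanged. This gives invariance of the $A$-marginal under $\sigma$, which is precisely exchangeability of $X_A$.

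The conditional claim is handled by the same extension, now with $C$ playing the role of the fixed block. Fixing disjoint $A,C$ with $V=A\cup C$ and a value $x^*_C$ for which the conditional law exists, take $\sigma$ a permutation of $A$ and extend it to $\pi\in S(V)$ that fixes $C$ pointwise. I would write the conditional mass function as $p(x_A,x^*_C)/p_C(x^*_C)$, note that replacing $x_A$ by its $\sigma$-permutation amounts to applying $\pi$ to the full configuration $(x_A,x^*_C)$, and apply the symmetry of $p$ to the numerator; since the denominator does not involve the $A$-coordinates it is left intact. Hence the conditional law of $X_A$ given $X_C=x^*_C$ is $\sigma$-invariant, i.e.\ $X_A\cd X_C=x^*_C$ is exchangeable.

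I expect the only genuine subtlety to be measure-theoretic rather than combinatorial: in the discrete or absolutely continuous case the steps above are a one-line calculation, whereas for general state spaces one must ensure that regular conditional distributions exist and that the permutation invariance descends to them --- which is exactly the content of the qualifier ``if they exist'' in the statement. The combinatorial core, namely the extend-and-fix device, is entirely routine, so I would present the argument at the level of densities/mass functions and remark that the general case follows by a standard disintegration.
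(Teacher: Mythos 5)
Your argument is correct and coincides with the paper's own proof: both rest on extending a permutation of the sub-block to a permutation of $V$ that fixes the remaining coordinates, then summing out for the marginal and cancelling the denominator for the conditional. The paper states this more tersely, but the idea is identical.
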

\begin{proof}
First we prove closedness under marginalization: For a permutation matrix $\pi$ of $A$, let the permutation matrix over $V$ be $\pi^*(a)=\pi(a)$, for $a\in A$, and $\pi^*(b)=b$ for $b\in V\setminus A$. The proof then follows from exchangeability of $X_V$.

Now we prove closedness under conditioning: By the definition of conditioning, we need to prove that $P(x_A,x^*_C)=P(x_{\pi(A)},x^*_C)$ for every $\pi$. This is again true by considering $\pi^*(a)=\pi(a)$, for $a\in A$, and $\pi^*(c)=c$ for $c\in C$.
\end{proof}
Notice that the above result implies that the marginal/conditional  $X_A\cd X_C$, (i.e., when $A\cup C\subset V$) is also exchangeable. We now have the following results:
\begin{prop}\label{prop:3}
If $P$ satisfies vector exchangeability then the following holds:
\begin{enumerate}
  \item $P$ satisfies upward-stability if and only if it satisfies composition.
  \item $P$ satisfies downward-stability if and only if it satisfies intersection.
\end{enumerate}
\end{prop}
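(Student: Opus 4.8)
The statement asserts two equivalences, and in each one of the two implications is already free of charge from Lemma~\ref{lem:1}: upward-stability implies composition, and downward-stability implies intersection, for \emph{any} $P$. So the only content to establish is the two converses, and this is exactly where exchangeability should enter. The plan is to exploit the single structural fact that exchangeability gives us: the induced model $\mathcal{J}(P)$ is invariant under relabelling, so whenever $u\ci v\cd C$ holds and $\pi$ is a permutation of $V$, the statement $\pi(u)\ci\pi(v)\cd\pi(C)$ also holds. In practice I will only ever apply a single transposition $(v\,w)$ to manufacture a ``companion'' independence statement with the same conditioning cardinality, and then finish with fixed semi-graphoid/graphoid manipulations.

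For the converse in part~1 (composition $\Rightarrow$ upward-stability), suppose $u\ci v\cd C$ and take $w\in V\setminus(C\cup\{u,v\})$. Since $v,w\notin C$ and both differ from $u$, the transposition $(v\,w)$ fixes $u$ and fixes $C$ pointwise while swapping $v$ and $w$; exchangeability therefore yields the companion statement $u\ci w\cd C$. Composition applied to $u\ci v\cd C$ and $u\ci w\cd C$ gives $u\ci\{v,w\}\cd C$, and weak union (always available in a semi-graphoid) descends this to $u\ci v\cd C\cup\{w\}$, which is precisely upward-stability.

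For the converse in part~2 (intersection $\Rightarrow$ downward-stability), suppose $u\ci v\cd C$ and take $w\in V\setminus\{u,v\}$. If $w\notin C$ the conclusion $u\ci v\cd C\setminus\{w\}$ is the original statement, so assume $w\in C$ and write $C=(C\setminus\{w\})\cup\{w\}$. Now the transposition $(v\,w)$ fixes $u$, fixes $C\setminus\{w\}$ pointwise (as $v\notin C$), and sends $w$ to $v$, so it maps $C$ to $(C\setminus\{w\})\cup\{v\}$; exchangeability then produces $u\ci w\cd (C\setminus\{w\})\cup\{v\}$. Together with the original $u\ci v\cd(C\setminus\{w\})\cup\{w\}$, these are exactly the two hypotheses of intersection taken with base conditioning set $C\setminus\{w\}$ and $A=\{u\}$, $B=\{v\}$, $D=\{w\}$, so intersection gives $u\ci\{v,w\}\cd C\setminus\{w\}$, and decomposition yields $u\ci v\cd C\setminus\{w\}$. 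This establishes downward-stability. (As a cross-check, part~2 can alternatively be read off from part~1 through the duality of Lemma~\ref{lem:dual}, since permutation-invariance of $\mathcal{J}(P)$ transfers to $\mathcal{J}^d$, composition dualizes to intersection, and upward-stability dualizes to downward-stability.)

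I do not expect any analytic difficulty here: once the companion statement is in hand, the rest is the standard two-step semi-graphoid argument (composition then weak union; intersection then decomposition). The only place demanding care is the combinatorial bookkeeping for the transposition, namely verifying that $(v\,w)$ genuinely fixes the relevant conditioning set and that the resulting companion triple is legitimate (disjoint, with a conditioning set of the correct cardinality) so that exchangeability may be invoked; this is routine but must be checked separately in the two sign cases $w\notin C$ and $w\in C$ of part~2.
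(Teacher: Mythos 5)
Your proposal is correct. For part~1 the converse is argued exactly as in the paper: exchangeability manufactures the companion statement $u\ci w\cd C$ (the paper phrases this as ``by exchangeability, for an arbitrary $k\notin C\cup\{i,j\}$ we have $i\ci k\cd C$''), and then composition followed by weak union gives upward-stability. For part~2 the paper simply invokes the duality of Lemma~\ref{lem:dual} (composition dualizes to intersection, upward-stability to downward-stability, and permutation-invariance passes to $\mathcal{J}^d$), whereas you run the mirror argument directly: the transposition $(v\,w)$ sends $u\ci v\cd C$ to $u\ci w\cd (C\setminus\{w\})\cup\{v\}$, and these two statements are precisely the hypotheses of intersection over the base set $C\setminus\{w\}$, after which decomposition finishes. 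Both routes are valid; your direct version has the small virtue of being self-contained and of making explicit the bookkeeping (that $(v\,w)$ fixes $u$ and $C\setminus\{w\}$ pointwise) that the duality argument hides, while the paper's version is shorter and reuses machinery it needs elsewhere anyway. You also note the duality route as a cross-check, so nothing is missing.
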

\begin{proof}
1. ($\Rightarrow$) follows from Lemma \ref{lem:1}. To prove ($\Leftarrow$), let $i\ci j\cd C$. By exchangeability, for an arbitrary $k\notin C\cup\{i,j\}$, we have $i\ci k\cd C$. Composition implies $i\ci j\cup k\cd C$. Weak union implies $i\ci j\cd C\cup\{k\}$.

2. follows from 1.\ and the consequence of duality provided in Lemma \ref{lem:dual}.
\end{proof}

\begin{prop}\label{prop:4}
If $P$ is exchangeable then it satisfies singleton-transitivity.
\end{prop}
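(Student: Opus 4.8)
The plan is to reduce the entire statement to the single structural fact that exchangeability of $P$ is precisely invariance of $P$ under relabelings of the index set, and that this invariance is automatically inherited by the induced independence model $\mathcal{J}(P)$. Concretely, the first thing I would record is: if $P$ is exchangeable and $\pi$ is any permutation of $V$, then for disjoint $A,B,C$ one has $A\ci B\cd C$ if and only if $\pi(A)\ci \pi(B)\cd \pi(C)$, since $P$ and the distribution of the permuted vector are literally the same distribution and conditional independence is a property of that distribution. This permutation-invariance of $\mathcal{J}(P)$ is the only ingredient I will use.

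Next I would unpack the hypotheses of singleton-transitivity. We are given three distinct singletons $u,v,w\in V$ and a set $C$ that is disjoint from $\{u,v,w\}$; the disjointness is forced, since for the triples $\langle u,v\cd C\rangle$ and $\langle u,v\cd C\cup\{w\}\rangle$ to be well-formed independence statements we need $u,v,w$ pairwise distinct and none of them in $C$. Under these conventions the hypotheses read $u\ci v\cd C$ and $u\ci v\cd C\cup\{w\}$, and the goal is to produce $u\ci w\cd C$ or $v\ci w\cd C$.

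The key (and essentially only) step is to apply one transposition. Let $\pi$ be the transposition that interchanges $v$ and $w$ and fixes every other index. Because $v,w\notin C$ and $v,w\neq u$, this $\pi$ fixes $u$ and fixes $C$ pointwise, so $\pi(C)=C$. Applying permutation-invariance of $\mathcal{J}(P)$ to the hypothesis $u\ci v\cd C$ gives $\pi(u)\ci\pi(v)\cd\pi(C)$, i.e.\ $u\ci w\cd C$, which is already the first disjunct of the required conclusion. (Interchanging $u$ and $w$ instead would symmetrically yield $v\ci w\cd C$.) Hence singleton-transitivity holds.

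I do not expect a genuine obstacle: the mathematical content is a single relabeling. The only points demanding care are the justification that invariance of $P$ passes to $\mathcal{J}(P)$, and the verification that the chosen transposition genuinely stabilizes the conditioning set $C$ and the third singleton --- both guaranteed by the disjointness built into the hypotheses. It is worth remarking in the write-up that the argument uses only the first hypothesis $u\ci v\cd C$ and never the second, so exchangeable distributions in fact satisfy the stronger ``rotation'' property that $u\ci v\cd C$ alone forces $u\ci w\cd C$ for every admissible $w$.
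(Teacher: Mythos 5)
Your proof is correct and is essentially the paper's own argument: the paper's one-line proof also just applies a transposition fixing $u$ and $C$ to the hypothesis $i\ci j\cd C$ to conclude $i\ci k\cd C$ (and symmetrically $k\ci j\cd C$), likewise never using the second hypothesis. You have merely spelled out the permutation-invariance of $\mathcal{J}(P)$ and the disjointness bookkeeping that the paper leaves implicit.
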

\begin{proof}
If $i\ci j\cd C$ and $k\notin C\cup\{i,j\}$ then clearly $i\ci k\cd C$ and $k\ci j\cd C$.
\end{proof}
We see that the skeleton of an exchangeable distribution can only take a very specific form:
\begin{prop}\label{prop:5}
If $P$ is exchangeable then $\sk(P)$ is either an empty or a complete graph.
\end{prop}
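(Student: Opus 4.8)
The plan is to exploit the symmetry that exchangeability imposes on pairs of indices: the presence or absence of a single edge in $\sk(P)$ will be forced to propagate to every pair, leaving only the two extreme possibilities. Recall from the definition of the skeleton that two vertices $u$ and $v$ are non-adjacent in $\sk(P)$ exactly when there is some $C\subseteq V\setminus\{u,v\}$ with $u\ci v\cd C$, and adjacent otherwise, that is, when $u$ and $v$ are completely dependent.

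The key step I would isolate first is that exchangeability makes the induced independence model invariant under relabeling: if $P$ is exchangeable and $u\ci v\cd C$, then $\pi(u)\ci\pi(v)\cd\pi(C)$ for every permutation $\pi$ of $V$, where $\pi(C)=\{\pi(c):c\in C\}$. This is immediate from the definition of exchangeability: relabeling every index by $\pi$ sends the statement $u\ci v\cd C$ to $\pi(u)\ci\pi(v)\cd\pi(C)$, and since the law of $X_V$ is $\pi$-invariant the two statements are equivalent; alternatively one restricts to the marginal law of $(X_u,X_v,X_C)$, which is exchangeable by Proposition \ref{prop:3n}.

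With this in hand, the dichotomy is a transitivity argument. Suppose $\sk(P)$ is not complete; then some pair $\{u,v\}$ is non-adjacent, witnessed by a set $C$ with $u\ci v\cd C$. Given an arbitrary pair $\{a,b\}$, I would choose a permutation $\pi$ of $V$ with $\pi(u)=a$ and $\pi(v)=b$, which always exists since the symmetric group acts transitively on ordered pairs of distinct elements. By the invariance above, $a\ci b\cd\pi(C)$ with $\pi(C)\subseteq V\setminus\{a,b\}$, so $\{a,b\}$ is non-adjacent as well. As $\{a,b\}$ was arbitrary, every pair is non-adjacent and $\sk(P)$ is empty. Hence $\sk(P)$ is either complete or empty.

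The argument is purely a symmetry argument, so I do not expect a genuine computational obstacle. The only point needing a little care is the verification of the invariance claim in the second paragraph, namely that a permutation symmetry of $P$ carries conditional independence statements to their relabeled images; once that is granted, the transitivity of the group action on pairs closes the proof immediately.
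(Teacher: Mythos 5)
Your proposal is correct and is essentially the paper's own argument: exchangeability makes the induced independence model invariant under relabeling, so a single witness $u\ci v\cd C$ propagates by a permutation sending $(u,v)$ to any pair $(a,b)$, forcing $\sk(P)$ to be empty, and otherwise it is complete. (The parenthetical alternative via the marginal of $(X_u,X_v,X_C)$ is not needed and would not by itself yield invariance under arbitrary permutations of $V$; the direct relabeling argument you give first is the right one.)
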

\begin{proof}
If there is any independence statement of form $i\ci j\cd C$ then by permutation for all variables, we obtain $k\ci l\cd C'$  for all $k$ and $l$. Therefore, $\sk(P)$ is empty. If there is no independence statement of this form then $\sk(P)$ is complete.
\end{proof}

Notice that for faithfulness to empty or complete graphs, it is immaterial
whether one considers undirected or bidirected interpretation of graphs.
\begin{coro}
Let $P$ satisfy vector exchangeability. If $P$ is faithful to a graph (both under undirected interpretation and under bidirected
interpretation) then the graph is empty or complete.
\end{coro}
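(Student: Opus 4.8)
The plan is to reduce the statement to Proposition~\ref{prop:5} by showing that faithfulness forces the skeleton of the graph to coincide with $\sk(P)$, whose shape Proposition~\ref{prop:5} has already pinned down. Throughout I would treat the undirected and the bidirected interpretations in parallel, since the corollary asserts the conclusion in both.

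First I would record the elementary graph-theoretic fact that, under either interpretation, two distinct vertices $u$ and $v$ fail to be adjacent in a graph $G$ if and only if \emph{some} conditioning set separates them. Under the undirected interpretation, if $u$ and $v$ are non-adjacent then $C=V\setminus\{u,v\}$ separates them, because every $u$--$v$ path must use a third vertex and all such inner vertices lie in $C$; conversely, if $u\sim v$ then the single edge $uv$ is a path with no inner vertex, so no $C$ can separate them, giving $u\notperp_u v\cd C$ for all $C$. Under the bidirected interpretation the dual observation applies: if $u$ and $v$ are non-adjacent then $C=\varnothing$ separates them, since every $u$--$v$ path has an inner vertex, which is necessarily outside $\{u,v\}$; whereas if $u\sim v$ the one-edge path $uv$ lies entirely inside $\{u,v\}\cup C$ for every $C$, so again no set separates them.

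Next I would combine this equivalence with the definition of $\sk(P)$ and with faithfulness. By definition $u$ and $v$ are non-adjacent in $\sk(P)$ exactly when $u\ci v\cd C$ for some $C$; by faithfulness this is equivalent to $u\perp v\cd C$ for some $C$, which by the previous step is equivalent to $u$ and $v$ being non-adjacent in $G$. Hence $\sk(P)=\sk(G)$. Since $P$ is exchangeable, Proposition~\ref{prop:5} tells us that $\sk(P)$, and therefore $\sk(G)$, is either empty or complete. Finally, a graph whose skeleton is empty has no edges and a graph whose skeleton is complete joins every pair of vertices, so $G$ itself is empty in the first case and complete in the second, under either interpretation.

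The only point requiring genuine care is the separability-versus-adjacency equivalence in each of the two interpretations; the remainder is a direct appeal to Proposition~\ref{prop:5} together with the definitions of faithfulness and of the skeleton of a distribution. I therefore expect no real obstacle, as the corollary is essentially a repackaging of Proposition~\ref{prop:5} through the definition of faithfulness.
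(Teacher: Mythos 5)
Your proof is correct and follows exactly the route the paper intends: the paper states this corollary without proof as an immediate consequence of Proposition~\ref{prop:5}, and your argument simply fills in the implicit step that faithfulness forces $\sk(G)=\sk(P)$ under either interpretation. The adjacency-versus-separability equivalences you verify for undirected and bidirected graphs are exactly the facts the paper records informally in Section~\ref{sec:23}, so there is no gap.
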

Hence, there are two regimes available: if there is no independence statement implied by $P$ then we are in the complete graph regime; and if there is at least one conditional independence statement implied by $P$ then we are in the empty graph regime. The following also provides conditions for the opposite direction of the above result:
\begin{theorem}
If $P$ is exchangeable then $P$ is faithful to a graph (both under undirected interpretation and under bidirected
interpretation) if and only if $P$ satisfies the intersection and composition properties. The graph must then be either empty or complete.
\end{theorem}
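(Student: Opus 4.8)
The plan is to combine the two parts of Proposition~\ref{prop:und} with the structural results already established for exchangeable distributions. Recall that Proposition~\ref{prop:und} characterizes faithfulness: $P$ is faithful to $G_u(P)$ (the undirected interpretation) if and only if $P$ satisfies intersection, singleton-transitivity, and upward-stability, while $P$ is faithful to $G_b(P)$ (the bidirected interpretation) if and only if $P$ satisfies composition, singleton-transitivity, and downward-stability. Since exchangeability already grants singleton-transitivity for free by Proposition~\ref{prop:4}, the only conditions left to verify or extract are the stability and the intersection/composition properties.

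First I would prove the backward direction ($\Leftarrow$). Assume $P$ is exchangeable and satisfies both intersection and composition. By Proposition~\ref{prop:3}, for an exchangeable $P$ intersection is equivalent to downward-stability and composition is equivalent to upward-stability. Hence $P$ satisfies all three hypotheses of \emph{each} part of Proposition~\ref{prop:und} simultaneously: intersection, singleton-transitivity, and upward-stability give faithfulness to $G_u(P)$; composition, singleton-transitivity, and downward-stability give faithfulness to $G_b(P)$. Therefore $P$ is faithful to a graph under both interpretations.

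Next I would prove the forward direction ($\Rightarrow$). Suppose $P$ is exchangeable and faithful to a graph under both interpretations. Faithfulness under the undirected interpretation forces the graph to be $G_u(P)$, and by Proposition~\ref{prop:und}(1) this yields that $P$ satisfies upward-stability; by Proposition~\ref{prop:3}(1) upward-stability is equivalent to composition, so composition holds. Dually, faithfulness under the bidirected interpretation gives $G_b(P)$, and Proposition~\ref{prop:und}(2) yields downward-stability, which by Proposition~\ref{prop:3}(2) is equivalent to intersection; so intersection holds. Thus both intersection and composition hold. The final sentence of the statement---that the graph is empty or complete---is an immediate consequence of Proposition~\ref{prop:5}, since $\sk(P)$ is empty or complete and the faithful graph has skeleton $\sk(P)$.

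**The main obstacle** is mostly organizational rather than technical: one must be careful about the logical role of ``faithful to a graph under both interpretations.'' The subtlety is that a single fixed graph is being interpreted two ways, so I would emphasize that the common skeleton $\sk(P)$ drives both $G_u(P)$ and $G_b(P)$, and that on an empty or complete skeleton the undirected and bidirected separation relations coincide (as already noted in the remark preceding the corollary). This makes the ``both interpretations'' hypothesis coherent and ensures the two applications of Proposition~\ref{prop:und} refer to genuinely the same underlying graph. Once this is clear, the argument is a direct assembly of Propositions~\ref{prop:und}, \ref{prop:3}, \ref{prop:4}, and~\ref{prop:5}, with no new computation required.
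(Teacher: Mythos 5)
Your proposal is correct and follows essentially the same route as the paper, which proves the first claim by citing Propositions~\ref{prop:und}, \ref{prop:3}, and~\ref{prop:4} and the second by citing Proposition~\ref{prop:5}; you have simply spelled out the assembly in both directions. The added remark that the undirected and bidirected interpretations coincide on an empty or complete skeleton is a sensible clarification but not a departure from the paper's argument.
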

\begin{proof}
The first result follows from Propositions \ref{prop:und}, \ref{prop:3}, and \ref{prop:4}. The second follows from Proposition \ref{prop:5}.
\end{proof}
Indeed other exchangeable distributions may exist but they are not faithful to a graph. We then have the following corollaries:
\begin{coro}
 Let $P$ be exchangeable and there exists an independence statement induced by $P$. It then holds that  all variables $X_v$ are completely independent of each other if and only if $P$ satisfies intersection and composition.
\end{coro}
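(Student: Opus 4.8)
The plan is to prove the two implications separately, and the first thing I would record is that the standing hypothesis---that some independence statement $i\ci j\cd C$ is induced by $P$---places us in the empty-skeleton regime: by Proposition~\ref{prop:5} this single statement forces $\sk(P)$ to be the empty graph. This observation is what gives the equivalence content, since without it the complete-graph regime would satisfy intersection and composition yet display no complete independence.

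For the direction from complete independence to the two properties, I would assume that every pair of distinct variables is completely independent, i.e.\ that $u\ci v\cd C$ holds for all distinct $u,v$ and all $C\subseteq V\setminus\{u,v\}$. Because every singleton conditional independence statement is then present in $\mathcal{J}(P)$, the conclusions of both upward-stability and downward-stability hold automatically, so $P$ satisfies both stability properties trivially. I would then invoke Proposition~\ref{prop:3} to pass from upward-stability to composition and from downward-stability to intersection, which is exactly what is required.

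For the converse, I would assume $P$ satisfies intersection and composition and chain the available results. By Proposition~\ref{prop:3}, composition is equivalent to upward-stability and intersection to downward-stability; by Proposition~\ref{prop:4}, exchangeability supplies singleton-transitivity for free. Feeding intersection, singleton-transitivity, and upward-stability into Proposition~\ref{prop:und}(1) gives that $P$ is faithful to $G_u(P)=\sk(P)$, which by the first paragraph is the empty graph. Since the empty undirected graph contains no paths at all, $u\perp_u v\cd C$ holds for every pair and every $C$, and faithfulness transfers this to $u\ci v\cd C$ for all such $u,v,C$---precisely pairwise complete independence. (Alternatively I could cite the preceding Theorem directly: intersection and composition yield faithfulness to an empty or complete graph, and the existence of an independence statement excludes the complete case.)

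I do not expect a substantial obstacle here, as all the heavy lifting is done by the propositions of this section; the only point demanding care is the correct interpretation of the hypothesis and the recognition that ``pairwise complete independence'' is nothing other than faithfulness to the empty graph. The mild subtlety in the forward direction is that composition and intersection are stated for arbitrary disjoint subsets, so I would route through the singleton stability properties via Proposition~\ref{prop:3} rather than trying to verify the subset-level properties by hand.
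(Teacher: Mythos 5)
Your proof is correct and follows essentially the route the paper intends: the corollary is stated there without a separate proof, as an immediate consequence of the preceding theorem together with Proposition~\ref{prop:5}, and your argument reassembles exactly those ingredients (Propositions~\ref{prop:3}, \ref{prop:4}, \ref{prop:und}, and the empty-skeleton observation). Your forward direction---noting that pairwise complete independence makes upward- and downward-stability hold vacuously and then passing through Lemma~\ref{lem:1}/Proposition~\ref{prop:3}---is a clean way to bridge the pairwise notion of ``complete independence'' to the set-level composition and intersection properties, a step the bare citation of the theorem would leave implicit.
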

\begin{coro}
Let $P$ be a regular exchangeable Gaussian distribution. If there is a zero element in its covariance matrix then all $X_v$ are completely independent; and otherwise they are completely dependent.
\end{coro}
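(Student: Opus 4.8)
The plan is to reduce the statement to the preceding Corollary by exploiting the special structure of a regular Gaussian together with the exchangeability constraint on its covariance matrix. First I would record that for a regular (nondegenerate) multivariate Gaussian $P$, the induced independence model $\mathcal{J}(P)$ is a compositional graphoid, as noted in Section~\ref{sec:propind}; in particular $P$ satisfies both intersection and composition unconditionally. Thus the hypothesis ``$P$ satisfies intersection and composition'' appearing in the earlier results is automatically met, and only the dichotomy ``some independence statement holds versus none holds'' remains to be decided, which I will read off directly from the covariance matrix.

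Next I would describe the covariance matrix $\Sigma$ of an exchangeable Gaussian. Vector exchangeability forces $\Sigma_{\pi(i)\pi(j)}=\Sigma_{ij}$ for every permutation $\pi$, so all diagonal entries share a common value $a$ and all off-diagonal entries share a common value $b$; that is, $\Sigma=(a-b)I+bJ$, with $J$ the all-ones matrix. Regularity (positive definiteness) forces $a>0$, so no diagonal entry can vanish. Consequently a zero element of $\Sigma$ can only be an off-diagonal entry, and since all off-diagonal entries equal $b$, a single zero off-diagonal entry is equivalent to $b=0$, i.e.\ to $\Sigma$ being diagonal.

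The two cases then follow from the preceding Corollary. In the first case $\Sigma$ has a zero entry, hence $b=0$; using the standard Gaussian fact that a vanishing covariance entry is equivalent to marginal independence, $\Sigma_{ij}=0$ gives $X_i\ci X_j$, so $P$ induces a (nontrivial) independence statement, and the Corollary yields that all $X_v$ are completely independent. In the second case no entry is zero, so every off-diagonal entry equals some $b\neq 0$; here I argue by contradiction. If $P$ induced any independence statement, then—$P$ being exchangeable and satisfying intersection and composition—all variables would be completely independent, so in particular $X_i\ci X_j$ marginally for some pair, forcing $\Sigma_{ij}=0$ and hence, by exchangeability, $b=0$, a contradiction. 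Therefore $P$ induces no independence statement, which is precisely the assertion that every pair $X_i,X_j$ is completely dependent.

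The main obstacle is not a computation but making sure the two standard inputs are invoked correctly: that regular Gaussian distributions are automatically compositional graphoids (so the intersection/composition hypotheses of the earlier Corollary come for free), and that for Gaussians a vanishing covariance entry is exactly marginal independence (so ``a zero element of $\Sigma$'' translates into ``an independence statement exists''). Once these are in place, the exchangeable structure $\Sigma=(a-b)I+bJ$ propagates a single zero, or a single nonzero, off-diagonal entry to all of them, and the dichotomy of the preceding results closes the argument.
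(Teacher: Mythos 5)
Your proof is correct and follows essentially the same route as the paper: the paper's own proof is the one-line observation that a regular Gaussian satisfies intersection and composition, so the preceding Corollary applies, and your additional bookkeeping (the form $\Sigma=(a-b)I+bJ$, the fact that a zero entry must be off-diagonal and hence forces $b=0$, and the equivalence of $\Sigma_{ij}=0$ with marginal independence) merely spells out the translation between ``a zero element of the covariance matrix'' and ``an independence statement exists'' that the paper leaves implicit and partly defers to the remark following the corollary.
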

\begin{proof}
The proof follows from the fact that a regular Gaussian distribution satisfies the intersection and composition properties.
\end{proof}
Notice that the above statement could be shown otherwise since a zero off-diagonal entry of the covariance matrix can be permuted by exchangeability to every other off-diagonal entry, making the covariance matrix diagonal.
\begin{prop}
If $P$ is exchangeable, satisfies intersection (this holds when $P$ has a positive density), and if there exists one independence statement induced by $P$ then all variables $X_v$ are marginally independent of each other.
\end{prop}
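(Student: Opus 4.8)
The plan is to derive the pairwise marginal independence of all the variables from a single conditional independence statement, by first stripping off its conditioning set using downward-stability and then transporting the resulting marginal statement to every pair of indices via exchangeability.

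First I would clarify what the hypothesis ``there exists one independence statement induced by $P$'' provides: a nontrivial statement $\langle A,B\cd C\rangle\in\mathcal{J}(P)$ with $A$ and $B$ both nonempty (the statements with an empty component are always present and carry no information). Applying the decomposition property (the second semi-graphoid axiom) to $A\ci B\cd C$, I extract a singleton statement $i\ci j\cd C$ for some distinct $i,j\in V$ and some $C\subseteq V\setminus\{i,j\}$. This reduction to singletons is what makes downward-stability, which is stated only for single elements, applicable.

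Next, since $P$ is exchangeable and satisfies intersection, part 2 of Proposition~\ref{prop:3} yields that $P$ satisfies downward-stability. I then apply downward-stability (property 9) repeatedly, removing the elements of $C$ one at a time, to descend from $i\ci j\cd C$ to $i\ci j\cd\varnothing$, that is, to the marginal statement $i\ci j$. Finally I would transport this to an arbitrary pair: for distinct $u,v\in V$ choose $\pi\in S(n)$ with $\pi(i)=u$ and $\pi(j)=v$; by exchangeability (and Proposition~\ref{prop:3n} on closedness under marginalization) the bivariate marginal law of $(X_u,X_v)$ equals that of $(X_i,X_j)$, and since $X_i$ and $X_j$ are independent in the latter, $X_u$ and $X_v$ are independent as well. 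Hence $u\ci v$ for every pair, which is precisely the claimed marginal independence of all the variables (consistent with $\sk(P)$ being empty, cf.\ Proposition~\ref{prop:5}).

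The step requiring the most care is the combination of the reduction and the stripping: one must obtain a genuine singleton statement before downward-stability can act, and one must remember that intersection alone does \emph{not} yield mutual (complete) independence---only the pairwise marginal independence asserted here---because composition, and hence upward-stability, is not assumed. I expect no deeper obstacle beyond being precise about these two points.
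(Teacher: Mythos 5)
Your proposal is correct and follows essentially the same route as the paper's own proof: decomposition to extract a singleton statement $i\ci j\cd C$, downward-stability (obtained from exchangeability plus intersection via Proposition~\ref{prop:3}) to strip the conditioning set, and exchangeability to transport $i\ci j$ to every pair. The extra care you take about nontriviality of the given statement and about reducing to singletons before invoking downward-stability is sound and merely makes explicit what the paper leaves implicit.
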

\begin{proof}
By Proposition \ref{prop:3}, $P$ satisfies downward-stability. An independence statement $A\ci B\cd C$, by the use of decomposition implies $i\ci j\cd C$ for an arbitrary $i\in A$ and $j\in B$.  Downward-stability implies $i\ci j$. Exchangeability implies all pairwise marginal independences.
\end{proof}
\begin{example}
Consider an exchangeable distribution $P$ over four variables $(i,j,k,l)$ with $i\ci j\cd k$. By exchangeability, all independences of form $\pi(i)\ci \pi(j)\cd \pi(k)$ hold for any permutation $\pi$ on  $(i,j,k,l)$. It is easy to see that none of the semi-graphoid axioms can generate new independence statements from these. If intersection holds then, for example, from $i\ci j\cd k$ and $i\ci k\cd j$, we obtain $i\ci \{j,k\}$, which by decomposition implies $i\ci j$, and hence all marginal independences between singletons.  If composition holds then, for example, from $i\ci j\cd k$ and $i\ci l\cd k$, we obtain $i\ci \{j,l\}\cd k$, which by weak union implies $i\ci j\cd \{k,l\}$, and hence all conditional independences between singletons given the remaining variables.
\end{example}

\section{Results for exchangeability for random networks}\label{sec:4}
Henceforth, in the context of random networks, we consider vectors whose components are indexed by dyads (i.e.\ two-element subsets of the node set $\mathcal{N}$). Thus,
for a conditional independence statement of the form $A\ci B \cd C$, $A,B,C \subset \mathcal{D}(\mathcal{N})$ are
pairwise disjoint subsets of dyads. (Later on, we particularly write the conditioning sets as simply $C$, which should be considered a subset of dyads.) Notice that we simply use the notation $ij$ for a dyad whose endpoints are nodes $i$ and $j$. This is different from the notation $\{i,j\}$, which indicates the set of two nodes $i$ and $j$ as used in the previous section.
%
\subsection{Marginalization and conditioning for exchangeable random networks}
Here we focus on marginalization over and conditioning on arbitrary sets of dyads. Notice that by \emph{marginalizing over} a set $M$, we mean we marginalize the set $M$ out, which results in a distribution over $\mathcal{D}(\nodeset)\setminus M$.
However, exchangeable networks are not always closed under marginalization over or conditioning on an arbitrary set of dyads:

For a marginal network $X_A$, where $A$ is a subset of dyads, exchangeability and summing up all probabilities over values of the dyads that are marginalized over imply that $P(X_A=x_A)=P((X_{\pi(i)\pi(j)})_{ij\in A}=x_A)$, for any permutation $\pi$. However, this is not necessarily equal to $P(X_A=(x_{\pi(i)\pi(j)})_{ij\in A})$, which is what we need for exchangeability of the marginal to hold. For conditioning, in fact, $X_A\cd X_C$ is not exchangeable if a node appears in a dyad in $A$ and a dyad in $C$ (e.g.\ $i$ appearing in $ij\in A$ and $ik\in C$). This is because, for a permutation $\pi$ that maps $i$ to a node other than $i$, exchangeability of $X_A\cd X_C$ is equivalent to $P(x_A\cd x_C)=P((x_{\pi(i)\pi(j)})_{ij\in A}\cd x_C)$, which itself is equivalent to $P(x_A,x_C)=P((x_{\pi(i)\pi(j)})_{ij\in A},x_C)$. But, this does not necessarily hold as $i$ is mapped to another node in $A$ but not in $C$.

However, we have the following:
\begin{prop}
Let $A$ and $C$  be disjoint subsets of dyads of an exchangeable random network $X$ such that $A$ and $C$ do not share any nodes, i.e.\ if $ij\in A$ then there is no dyad $ik$ or $jk$ in $C$ for any node $k\in\nodeset$. It then holds that the conditional/marginal random network $X_A\cd X_C$ is exchangeable.
\end{prop}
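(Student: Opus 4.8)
The plan is to reduce exchangeability of the conditional network to a statement about the \emph{joint} law of $X_{A\cup C}$, and then to derive that statement by marginalizing the weak exchangeability of the full network $X$ under a carefully chosen node permutation. As noted in the discussion preceding the proposition, exchangeability of $X_A\cd X_C$ is equivalent to
\[ P(x_A,x_C)=P\big((x_{\pi(i)\pi(j)})_{ij\in A},\,x_C\big) \]
for the relevant relabelings $\pi$ of the nodes incident to $A$, so it suffices to establish this joint identity. I would fix notation by letting $N_A$ and $N_C$ denote the sets of nodes incident to a dyad of $A$ and of $C$ respectively, so that the hypothesis reads precisely $N_A\cap N_C=\varnothing$.

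First I would restrict to permutations $\pi\in S(\nodeset)$ that permute $N_A$ (mapping $A$ onto $A$) and fix every node outside $N_A$; these are exactly the relabelings under which the network $X_A$ is required to be invariant. The decisive consequence of the disjointness hypothesis is that any such $\pi$ fixes every node of $N_C$, and hence $\pi(i)\pi(j)=ij$ for every dyad $ij\in C$. I would then invoke weak exchangeability of $X$: the networks $(X_{ij})_{ij}$ and $(X_{\pi(i)\pi(j)})_{ij}$ have the same distribution, so their marginals on the dyad set $A\cup C$ agree, that is $(X_{ij})_{ij\in A\cup C}$ and $(X_{\pi(i)\pi(j)})_{ij\in A\cup C}$ are equal in distribution.

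On the $C$-block this relabeling is the identity by the previous paragraph, so the coordinates indexed by $C$ are literally unchanged, while on the $A$-block it merely permutes the coordinates among themselves (because $\pi$ preserves $A$). Evaluating the distributional identity at a configuration and reindexing the $A$-block via $kl=\pi(i)\pi(j)$ turns it into $P(x_A,x_C)=P\big((x_{\pi^{-1}(i)\pi^{-1}(j)})_{ij\in A},\,x_C\big)$; since the admissible permutations form a group, replacing $\pi$ by $\pi^{-1}$ yields exactly the required identity, and dividing by $P(x_C)$ gives the conditional statement. The marginal case is recovered by taking $C=\varnothing$.

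The main obstacle---indeed the whole point---is the interaction between the two blocks: one needs a single permutation that \emph{genuinely relabels} $A$ yet leaves the conditioning values $x_C$ untouched, and this is possible only because $N_A$ and $N_C$ are disjoint. When nodes are shared, the permutation relabeling $A$ is forced to move a node of $C$, altering $x_C$ and destroying the marginalization identity; this is precisely the failure exhibited in the negative discussion preceding the proposition. A secondary bookkeeping point is the direction of the relabeling (the $\pi$ versus $\pi^{-1}$ discrepancy flagged there for marginals), which is resolved cleanly here exactly because $\pi$ maps $A$ onto itself, so the reindexing stays within the coordinate set $A$.
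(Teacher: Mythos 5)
Your proposal is correct and follows essentially the same route as the paper's proof: extend a permutation of the nodes incident to $A$ by the identity on the nodes incident to $C$ (well-defined precisely because the two node sets are disjoint), apply exchangeability of the full network, and marginalize to obtain $P(x_A,x_C)=P((x_{\pi(i)\pi(j)})_{ij\in A},x_C)$. Your additional bookkeeping about $\pi$ versus $\pi^{-1}$ is a minor refinement of the same argument, not a different approach.
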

\begin{proof}
Let $\nodeset(A\cup C)$ be the set of all endpoints of dyads in $A$ and $C$, and define similarly $\nodeset(A)$ and $\nodeset(C)$. Define the permutation $\pi^*\in S(\nodeset(A\cup C))$ such that $\pi^*(i)=\pi(i)$ for $i\in \nodeset(A)$ and $\pi^*(k)=k$ for $k\in \nodeset(C)$. Notice that this is well-defined since $A$ and $C$ do not share any nodes. Using $\pi^*$ and by exchangeability of $X$, we conclude that $P(x_A,x_C)=P((x_{\pi(i)\pi(j)})_{ij\in A},x_C)$, which, as mentioned before, is equivalent to the exchangeability of $X_A\cd X_C$.
\end{proof}

\subsection{Types of graphs faithful to exchangeable distributions}\label{sec:41}
An analogous result to that of vector exchangeability, concerning the skeleton of an exchangeable probability distribution, was proven in \cite{lau17}:
\begin{prop}\label{prop:7}
If a distribution $P$ over a random network $X$ is exchangeable then $\sk(P)$ is one of the following:
\begin{enumerate}
  \item the empty graph;
  \item the incidence graph;
  \item the complement of the incidence graph;
  \item the complete graph.
\end{enumerate}
\end{prop}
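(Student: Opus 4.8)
The plan is to exploit the fact that node relabelings act as a symmetry group on the set of dyads, and that exchangeability forces the skeleton to respect this symmetry. First I would recall from Section~\ref{sec:propind} that $\sk(P)$ is the undirected graph on the vertex set $\mathcal{D}(\mathcal{N})$ in which two dyads $d_1$ and $d_2$ are non-adjacent precisely when there exists some $C\subseteq \mathcal{D}(\mathcal{N})\setminus\{d_1,d_2\}$ with $d_1\ci d_2\cd C$. The crucial observation is that any permutation $\pi\in S(\mathcal{N})$ of the nodes induces a permutation of the dyads, and, by exchangeability, maps every valid independence statement $d_1\ci d_2\cd C$ to the valid statement $\pi(d_1)\ci \pi(d_2)\cd \pi(C)$. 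Consequently the property ``there exists $C$ with $d_1\ci d_2\cd C$'' is invariant under this induced action, so adjacency in $\sk(P)$ is constant on the orbits of pairs of distinct dyads.

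The next step is to enumerate these orbits. Two distinct dyads either share a node (e.g.\ $ij$ and $ik$) or are disjoint (e.g.\ $ij$ and $kl$ with $i,j,k,l$ distinct). I would argue that $S(\mathcal{N})$ acts transitively on each of these two classes: a sharing-a-node pair involves exactly three nodes and any such pair can be carried to any other by relabeling those three nodes, while a disjoint pair involves exactly four nodes and is interchangeable in the same way. Hence there are at most two orbits, and the adjacency of a pair in $\sk(P)$ is determined by a single binary choice for each orbit.

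These two binary choices yield exactly the four listed graphs: if no pair is adjacent we obtain the empty graph; if exactly the sharing-a-node pairs are adjacent we obtain the incidence graph $L_-(n)$ (the line graph of the complete graph, whose edges join dyads with a common endpoint); if exactly the disjoint pairs are adjacent we obtain its complement $L^c_-(n)$; and if all pairs are adjacent we obtain the complete graph. I would close by noting the degenerate small-$n$ behavior: for $n=3$ every pair of dyads shares a node, so the incidence graph is already complete and its complement empty, and the four cases collapse to two, which remains consistent with the statement.

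The step I expect to require the most care is verifying the transitivity of the node action on each of the two classes of dyad-pairs, since this is precisely what reduces the a priori large number of conceivable skeleta down to exactly four; the passage from orbit-invariance to the existence clause in the definition of $\sk(P)$ is then an immediate consequence of exchangeability.
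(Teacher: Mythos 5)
Your proof is correct and follows the standard route: the paper itself does not reprove this proposition but cites \cite{lau17}, and the argument there is precisely your orbit argument --- exchangeability makes the induced independence model invariant under the action of $S(\mathcal{N})$ on dyads, the existential clause in the definition of $\sk(P)$ is therefore orbit-invariant, and $S(\mathcal{N})$ acts transitively on the two classes of dyad-pairs (sharing a node, disjoint), leaving exactly the four listed skeleta. Your treatment of the degenerate case $n=3$, where the two orbits collapse to one, is also correct and consistent with the statement.
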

Notice that a pairwise independence statement for an exchangeable $P$ over a random network $X$ is of form $ij\ci kl\cd C$, where $i,j,k,l$ are nodes of $X$. Depending on the type of $\sk(P)$, these statements take different forms.
\begin{lemma}\label{lem:2}
Suppose that there exists an independence statement of form $ij\ci kl\cd C$, $i\neq j,k\neq l$, for an exchangeable $P$ over a random network $X$. It is then in one  of the following forms depending on the type of $\sk(P)$:
\begin{enumerate}
  \item empty graph $\Rightarrow$ no constraints on $i,j,k,l$;
  \item incidence graph $\Rightarrow$ $i\neq l,k$ and $j\neq l,k$;
  \item complement of the incidence graph $\Rightarrow$ $i=k$ or $i=l$ or $j=k$ or $j=l$;
\end{enumerate}
if $\sk(P)$ is the complete graph then it is not possible to have such a conditional independence statement.
\end{lemma}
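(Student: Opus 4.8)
The plan is to reduce the entire statement to a single observation about the skeleton, and then read off the three cases purely from the combinatorics of the incidence graph. The crucial first step is to note that, by the definition of $\sk(P)$ given in Section \ref{sec:23}, the dyads $ij$ and $kl$ are distinct vertices of $\sk(P)$ (the statement $ij\ci kl\cd C$ is an independence statement between the disjoint singleton dyad-sets $\{ij\}$ and $\{kl\}$, so $ij\neq kl$), and moreover the mere \emph{existence} of some conditioning set $C$ with $ij\ci kl\cd C$ means precisely that $ij$ and $kl$ are \emph{non-adjacent} in $\sk(P)$. Hence the whole lemma is nothing more than a translation of ``$ij$ and $kl$ are non-adjacent in $\sk(P)$'' into conditions on the nodes $i,j,k,l$, carried out separately for each of the four admissible skeleta listed in Proposition \ref{prop:7}.

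First I would recall the adjacency rule in each candidate skeleton and translate non-adjacency accordingly. In the incidence graph $L_{-}(n)=L(K_n)$, two dyads are adjacent if and only if they share a common endpoint node; hence non-adjacency of $ij$ and $kl$ is equivalent to $\{i,j\}\cap\{k,l\}=\varnothing$, i.e.\ $i\neq k,l$ and $j\neq k,l$, which is exactly case 2. In the complement of the incidence graph, two distinct dyads are adjacent if and only if they are non-adjacent in the incidence graph, i.e.\ if and only if they are disjoint; therefore non-adjacency in the complement is equivalent to $ij$ and $kl$ sharing a node, i.e.\ $i=k$ or $i=l$ or $j=k$ or $j=l$, which is case 3. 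When $\sk(P)$ is the empty graph every pair of distinct dyads is non-adjacent, so no relation among $i,j,k,l$ is forced beyond $ij\neq kl$, giving case 1. For the complete-graph skeleton every pair of distinct vertices is adjacent, so there is no non-adjacent pair of dyads at all; by the definition of $\sk(P)$ this says that $ij\notci kl\cd C$ for every $C$, so no independence statement of the stated form can exist.

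I do not expect a genuine obstacle here, as the argument is a direct unwinding of definitions. The only point requiring a little care is the bookkeeping that, for distinct dyads $ij$ and $kl$ with $i\neq j$ and $k\neq l$, ``sharing a node'' always means sharing \emph{exactly one} node (two shared nodes would force $ij=kl$), so that the disjointness/intersection dichotomy between the incidence graph and its complement is genuinely exhaustive; together with the empty- and complete-graph extremes, the three displayed cases and the impossibility statement then cover every possibility allowed by Proposition \ref{prop:7}.
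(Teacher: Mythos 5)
Your proof is correct and takes essentially the same route as the paper's: the paper's (one-line) argument likewise rests on the observation that the existence of $ij\ci kl\cd C$ for some $C$ forces $ij$ and $kl$ to be non-adjacent in $\sk(P)$, with the three cases then read off from the adjacency rule in each admissible skeleton of Proposition \ref{prop:7}. You have simply spelled out the definitional unwinding in more detail than the paper does.
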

\begin{proof}
The proof follows from the fact if there is an edge between $ij$ and $kl$ in $G(P)$ then there is no statement of form $ij\ci kl\cd C$.
\end{proof}
It is clear that if $\sk(P)$ is the complete graph then every dyad is completely dependent on every other dyad. Thus we consider the cases of the incidence graph skeleton and the complement of the incidence graph skeleton separately. However, before this, we show that among the known graphical models, only undirected and bidirected graphs can be faithful to an exchangeable distribution.

In order to do so, we can start off by considering any class of \emph{mixed graphs}, i.e.\ graphs with simultaneous undirected, directed, or bidirected edges that use the (unifying) separation criterion introduced in \cite{sadl16}. To the best of our knowledge, the largest class of such graphs is the class of \emph{chain mixed graphs} \cite{sadl16}, which includes the classes of \emph{ancestral graphs} \cite{ric02}, \emph{LWF} \cite{lau89} and \emph{regression chain graphs} \cite{wer94}, and several others; see \cite{sadl16}. We require some definitions and results, including the formulation of the separation criterion, which we only need for the results in this subsection. We provide these together with the proof of the main result (Theorem \ref{thm:2}) in Appendix \ref{app}. If the reader is only interested in the statement of the theorem and not the proof, we suggest that they skip the material in the appendix.

Two graphs are called \emph{Markov equivalent} if they induce the same independence model.
\begin{theorem}\label{thm:2}
If a distribution $P$  over an exchangeable random network with $n$ nodes is faithful to a chain mixed graph $G$ then $G$ is Markov equivalent to one of the following graphs:
\begin{enumerate}
  \item the empty graph;
  \item  the undirected incidence graph, $L_{-}(n)$;
  \item  the bidirected incidence graph, $L_{\leftrightarrow}(n)$;
  \item   the undirected complement of the incidence graph, $L^c_{-}(n)$;
  \item  the bidirected complement of the incidence graph, $L^c_{\leftrightarrow}(n)$;
  \item the complete graph.
\end{enumerate}
\end{theorem}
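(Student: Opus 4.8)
The plan is to argue entirely through the skeleton and the collider (triple) structure, reducing the problem to a symmetry computation. First, since $P$ and $G$ are faithful, two dyads are non-adjacent in $G$ exactly when some conditioning set renders them independent, i.e.\ when they fail to be completely dependent under $P$; hence $\sk(G)=\sk(P)$, and by Proposition \ref{prop:7} this skeleton is the empty graph, the incidence graph, its complement, or the complete graph. The two extreme cases are immediate: an empty skeleton forces $G$ to be the empty graph (case 1), while a complete skeleton forces $G$ to be a complete graph on $\mathcal{D}(\nodeset)$, and a complete graph induces no nontrivial separation statements under any edge typing, hence is Markov equivalent to the complete undirected graph (case 6). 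This leaves the incidence-graph skeleton and its complement.

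For these two skeleta I would invoke the Markov-equivalence characterization for chain mixed graphs developed in Appendix \ref{app}: the equivalence class of $G$ is determined by its skeleton together with its collider (triple) structure, and these invariants are functions of the induced independence model $\mathcal{J}(G)$. Because $P$ is exchangeable and faithful to $G$, we have $\mathcal{J}(G)=\mathcal{J}(P)$, and $\mathcal{J}(P)$ is invariant under the action of $S(\nodeset)$ that relabels nodes and thereby permutes dyads; consequently the entire collider structure of $G$ is $S(\nodeset)$-invariant. I would then exploit two combinatorial facts about the incidence graph (and, by an analogous computation, about its complement). First, for every edge there is a node relabeling that transposes its two endpoint-dyads (for the edge $\{ab,ac\}$, transpose $b$ and $c$); invariance of the collider structure under such swaps forces every edge to be \emph{symmetric}, that is a line or an arc and never an arrow. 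Second, $S(\nodeset)$ acts transitively on unshielded triples: every unshielded triple of the incidence graph has the form $ac - ab - bd$ on four distinct nodes, and every unshielded triple of its complement has the form $ab - de - ac$ on five distinct nodes, so all unshielded triples must share a single collider status.

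This pins the situation down to two sub-cases for each of the two skeleta: either no unshielded triple is a collider, or every one is. I would show that the former is Markov equivalent to the all-undirected graph ($L_{-}(n)$ or $L^c_{-}(n)$) and the latter to the all-bidirected graph ($L_{\leftrightarrow}(n)$ or $L^c_{\leftrightarrow}(n)$), by verifying that the remaining Markov-equivalence invariants (shielded colliders and discriminating-path colliders) also agree with those of the pure undirected or bidirected model; here $S(\nodeset)$-invariance is used once more to propagate the local collider data to a global match, and the duality statements of Lemmas \ref{lem:dual} and \ref{lem:5} organize the undirected/bidirected pairing so that only one member of each dual pair needs a full argument. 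Combined with cases 1 and 6, this produces exactly the six graphs in the statement.

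The main obstacle lies in the second and third steps: controlling the \emph{complete} Markov-equivalence invariant for chain mixed graphs, and in particular ruling out graphs carrying directed edges or shielded/discriminating-path colliders that might be equivalent to $P$ without being equivalent to the pure undirected or bidirected model. Turning the transitive $S(\nodeset)$-action on unshielded triples, together with the endpoint-swapping automorphisms, into genuine control over the entire collider structure — not merely its unshielded part — is the delicate point. Finally, small values of $n$, where the line graph loses edge-transitivity or Whitney-type rigidity and the orbit computations degenerate, must be treated separately by direct inspection.
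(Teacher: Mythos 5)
Your overall strategy---pin down the skeleton via Proposition \ref{prop:7}, dispose of the empty and complete cases, and then use the $S(\nodeset)$-action induced by exchangeability to force a uniform collider structure on the two nontrivial skeleta---is the same as the paper's. However, there is a genuine gap at exactly the step you flag as ``the delicate point'': you defer to a complete Markov-equivalence characterization for chain mixed graphs (skeleton plus collider structure, plus control of shielded and discriminating-path colliders) that you neither establish nor cite, and without it your argument does not close. In particular, your first combinatorial step is not sound as stated: Markov equivalence of $G$ with its image $H$ under the node transposition swapping $b$ and $c$ does not force the edge $\{ab,ac\}$ of $G$ itself to be symmetric, since Markov equivalence does not in general preserve edge marks; it only preserves certain separation-visible invariants. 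So ``invariance of the collider structure under such swaps forces every edge to be a line or an arc'' needs a mechanism you have not supplied.

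The paper closes the gap with a weaker pair of lemmas that make the full equivalence invariant unnecessary. Lemma \ref{lem:3n} gives only one direction: Markov-equivalent maximal graphs share the same \emph{unshielded collider trisections}. Lemma \ref{lem:3nn} shows that a graph with \emph{no} unshielded collider trisections is Markov equivalent to its skeleton, which immediately yields $L_{-}(n)$ or $L^c_{-}(n)$ in the collider-free case. In the remaining case, some edge (say $\{12,13\}$ in the incidence-graph skeleton) carries an arrowhead at $13$ witnessed by an unshielded collider trisection; applying the permutation of Lemma \ref{lem:3} that sends $(1,2,3)$ to $(i,j,k)$ and then Lemma \ref{lem:3n} transfers an arrowhead to the end $ik$ of the edge $\{ij,ik\}$ of $G$ itself, for every ordered choice of edge-end. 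Hence every edge of $G$ acquires arrowheads at both endpoints, so $G$ \emph{is} the bidirected graph $L_{\leftrightarrow}(n)$ (respectively $L^c_{\leftrightarrow}(n)$), and there is nothing left to check about shielded or discriminating-path colliders. This also renders your worry about small $n$ moot: when the skeleton admits no unshielded triples at all, Lemma \ref{lem:3nn} applies vacuously. To repair your write-up, replace the appeal to a full equivalence characterization by this two-lemma argument, or else prove the characterization you are invoking.
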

Hence, for exchangeable random networks, there are six regimes available, where these are three pairs that are complement of each other. Within the two non-trivial pairs, the two undirected and bidirected cases act as dual of each other as described in Section \ref{sec:dual}. We will make use of this duality to simplify the results and proofs.

Although the following method is not unique, here we provide a simple test to decide in which regime a given exchangeable distribution lies:
\begin{alg}\label{alg:1}
For arbitrary fixed nodes $i,j,k,l,m$ of a given exchangeable network, test the following:
\begin{itemize}
 \item $ij\ci kl\cd C$, for some $C$, and $ij\ci ik\cd C'$, for some $C'\Rightarrow$ Empty graph;
 \item $ij\ci kl\cd C$, for some $C$, and $ij\notci ik\cd C'$, for any $C'\Rightarrow L(n)$:
 \begin{itemize}
 \item $ik\in C\Rightarrow L_{-}(n)$;
 \item $ik\notin C\Rightarrow L_{\leftrightarrow}(n)$;
\end{itemize}
 \item $ij\notci kl\cd C$, for any $C$, and $ij\ci ik\cd C'$, for some $C'\Rightarrow L^c(n)$:
 \begin{itemize}
 \item $lm\in C'\Rightarrow L^c_{-}(n)$;
 \item $lm\notin C'\Rightarrow L^c_{\leftrightarrow}(n)$;
\end{itemize}
 \item $ij\notci kl\cd C$, for any $C$, and $ij\notci ik\cd C'$, for any $C'\Rightarrow$ Complete graph.
\end{itemize}
\end{alg}
\begin{prop}
If a distribution $P$  over an exchangeable random network is faithful to a graph $G$ then Algorithm \ref{alg:1} determines the Markov equivalence class of $G$.
\end{prop}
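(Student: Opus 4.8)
The plan is to exploit faithfulness together with Theorem \ref{thm:2}: since $P$ is faithful to $G$ we have $\mathcal{J}(P)=\mathcal{J}(G)$, and by Theorem \ref{thm:2} the graph $G$ is Markov equivalent to exactly one of the six listed graphs. It therefore suffices to evaluate the tests of Algorithm \ref{alg:1} directly on the six induced independence models, verify that each produces the label the algorithm assigns, and observe that the six answer-patterns are pairwise distinct (so the class is indeed \emph{determined}). Throughout I read each conditional independence query $ij\ci kl\cd C$ back as the separation query $ij\perp kl\cd C$ in the relevant graph, undirected or bidirected according to the regime, via the definition of faithfulness in Section \ref{sec:23}.

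First I would settle the four top-level branches. The query ``$ij\ci kl\cd C$ for some $C$'' is, by the definition of $\sk(P)$, exactly the assertion that the disjoint dyads $ij$ and $kl$ are non-adjacent in $\sk(P)$; likewise ``$ij\ci ik\cd C'$ for some $C'$'' asserts non-adjacency of the node-sharing dyads $ij$ and $ik$. Using the skeleton classification of Proposition \ref{prop:7} and the incidence structure recorded in Lemma \ref{lem:2}, the disjoint pair $ij,kl$ is non-adjacent in the incidence skeleton but adjacent in its complement, while the sharing pair $ij,ik$ is adjacent in the incidence skeleton but non-adjacent in its complement. Tabulating the resulting (yes/no) answers over the four skeleton types (empty, incidence, complement of incidence, complete) yields precisely the four patterns $(\text{yes},\text{yes})$, $(\text{yes},\text{no})$, $(\text{no},\text{yes})$, $(\text{no},\text{no})$ attached by the algorithm to the empty graph, $L(n)$, $L^c(n)$, and the complete graph, respectively.

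Next I would treat the undirected-versus-bidirected refinement, which is where the duality of Section \ref{sec:dual} and the two separation criteria do the work. In the $L(n)$ branch, the list $\langle ij,ik,kl\rangle$ is a path of the incidence graph whose only inner vertex is $ik$ (as $ij,ik$ share $i$ and $ik,kl$ share $k$). For $L_{-}(n)$, undirected separation $ij\perp_u kl\cd C$ forces this path to be blocked, so $ik\in C$ for \emph{every} separating $C$; for $L_{\leftrightarrow}(n)$, bidirected separation is destroyed the moment $ik$ is conditioned on, since then $\langle ij,ik,kl\rangle$ lies inside $\{ij,kl\}\cup C$, so $ik\notin C$ for every witness. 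Dually, in the $L^c(n)$ branch and with $l,m\notin\{i,j,k\}$, the list $\langle ij,lm,ik\rangle$ is a path of the complement graph with inner vertex $lm$; the same argument gives $lm\in C'$ for all undirected witnesses ($L^c_{-}(n)$) and $lm\notin C'$ for all bidirected witnesses ($L^c_{\leftrightarrow}(n)$). This matches the two sub-branches exactly.

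The main obstacle---and the only point requiring care---is the \emph{well-definedness} of the refinement tests: the algorithm checks membership of $ik$ (respectively $lm$) in a single witness $C$ (respectively $C'$) that it happens to find, so I must ensure the verdict does not depend on the choice of witness. This is exactly what the arguments above deliver, since they show that in the undirected regime \emph{every} separating set contains the relevant blocking vertex, whereas in the bidirected regime \emph{no} separating set may contain it; the two cases are mutually exclusive and each regime forces one of them, so any witness returned by the algorithm yields the correct label. The remaining bookkeeping is to record the node-count requirement: the refinement in the $L^c(n)$ branch needs five distinct nodes $i,j,k,l,m$ and that in the $L(n)$ branch needs four, so the argument presumes $n\ge 5$, with the finitely many smaller networks (where some regimes collapse) checked directly.
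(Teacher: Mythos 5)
Your proposal is correct and follows essentially the same route as the paper: the first-level queries pin down the skeleton via Proposition \ref{prop:7}, and the refinement rests on the path $\langle ij,ik,kl\rangle$ (resp.\ $\langle ij,lm,ik\rangle$), whose inner vertex must lie in every undirected separator and in no bidirected separator. Your well-definedness discussion is just the positive restatement of the paper's consistency remark (the paper argues that a witness-dependent verdict would contradict faithfulness to either graph), and your added note on the node count $n\ge 5$ is a harmless extra observation.
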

\begin{proof}
First, we show that this test covers all the possible cases: The first level tests ($ij\ci kl\cd C$ and $ij\ci ik\cd C'$) clearly cover all the cases concerning the skeleton of the graph. But, the second level test ($ik\in C$ or $lm\in C'$), which only concerns the non-trivial skeletons, might not be consistent: For example, first consider the $L(n)$ case, and assume that $ij\ci kl\cd C_1$ and $ij\ci kl\cd C_2$, for some $C_1,C_2$, but $ik\in C_1$ and $ik\notin C_2$. However, in such cases, it is easy to show that 
$P$ cannot be faithful to either of the two undirected or bidirected graphs. The case of $L^c(n)$ is similar.

The algorithm also outputs the correct regimes: The tests of $ij\notci kl\cd C$ and $ij\ci ik\cd C'$ clearly determine the skeleton $\sk(P)$. The test $ik\in C$ then determines the type of edges since, in $\langle ij,ik,kl\rangle$, if $ik\notin C$ then $ij$ and $kl$ cannot be separated in the undirected graph; and if $ik\in C$ then $ij$ and $kl$ cannot be separated in the bidirected graph. The test $lm\in C'$ can be proven similarly.
\end{proof}
Unlike the vector exchangeability case, the intersection and composition properties are not in general sufficient for faithfulness of exchangeable network distributions to the graphs provided in Theorem \ref{thm:2}. However, in special cases this holds. We detail this below for each regime mentioned above.
\subsection{The incidence graph case}
In this section, we assume that $\sk(P)=L_{-}(n)$. The following example shows that, in principle, intersection and composition are not sufficient for faithfulness of $P$ and $L_{-}(n)$ (and $P$ and the bidirected incidence graph $L_{\leftrightarrow}(n)$).
\begin{example}\label{ex:2}
Suppose that there is an exchangeable $P$ that induces $ij\ci kl\cd C_{ij,kl}$, where  $C_{ij,kl}=\{ik,il,jk,jl\}$. Exchangeability implies that $ij\ci kl\cd C_{ij,kl}$ for all $i,j,k,l$. Suppose, in addition, that $P$ satisfies upward-stability. Notice that here we do not show that such a probability distribution necessarily exists -- one can treat this defined independence model as an example of ``a network-exchangeable semi-graphoid".

It is easy to see that $\sk(P)= L_{-}(n)$. Moreover, by Lemma \ref{lem:1}, $P$ satisfies composition. 
 In addition, $P$ satisfies intersection: Notice that none of the semi-graphoid axioms plus upward-stability and composition imply an independence statement of form $A\ci B\cd C$ where $A,B$ both contain a node $i$. In addition, it can be seen that these axioms imply that if there is $A\ci B\cd C$ then for every $ij\in A$ and $kl\in B$, it holds that $C_{ij,kl}\subseteq C$. Let $C_{A,B}=\bigcup_{ij\in A,kl\in B}C_{ij,kl}$. By these two observations, we conclude that if $A\ci B\cd C\cup D$ and $A\ci D\cd C\cup B$ then $C_{A,D}\subseteq C$. By upward-stability all statements of form  $ij\ci kl\cd C$ where $C_{ij,kl}\subseteq C$ hold. Hence, by composition, we have that  $A\ci B\cd C$. Hence, by contraction,  $A\ci B\cup D\cd C$.

However, $P$ does not satisfy singleton-transitivity: Consider $ij\ci kl\cd C_{ij,kl}$ and $ij\ci kl\cd C_{ij,kl}\cup\{km\}$. If, for contradiction, singleton-transitivity holds then, because $\sk(P)=L_{-}(n)$, we have that $ij\ci km\cd C_{ij,kl}$. But, it can be seen that this statement is not in the independence model since no compositional graphoid axioms can generate this statement from $\{ij\ci kl\cd C_{ij,kl}\}$.

By Proposition \ref{prop:und} (1.), it is implied that, although intersection and composition are satisfied, $P$ is not faithful to $L_{-}(n)$.

If we now suppose that $P$ induces $ij\ci kl\cd C^d_{ij,kl}$, where $C^d_{ij,kl}=V\setminus (C_{ij,kl}\cup\{ij,kl\})$, $P$ is exchangeable, and it satisfies downward-stability then, by using the duality (Lemmas \ref{lem:dual} and \ref{lem:5}), we conclude that although intersection and composition are satisfied, $P$ is not faithful to $L_{\leftrightarrow}(n)$.
\end{example}
However, under certain assumptions, intersection and composition are sufficient for faithfulness. We will consider the two dual regimes within the incidence graph case related to the cases of Theorem \ref{thm:2}: the undirected incidence graph case and the bidirected incidence graph case. We make use of the duality between these to immediately extend the results in the undirected case to the bidirected case.

As in Example \ref{ex:2}, let $C_{ij,kl}=\{ik,il,jk,jl\}$. For the faithfulness results to the undirected case, one assumption that is used is that for some (and because of exchangeability for all) $i,j,k,l$, $ij\ci kl\cd C$ implies that $C_{ij,kl}\subseteq C$. For the faithfulness results to the bidirected case, one assumption is that for some (and because of exchangeability for all) $i,j,k,l$, $ij\ci kl\cd C$ implies that $C_{ij,kl}\cap C=\varnothing$.
%


For a separation statement $A\dse B\cd C$, we define $C$ to be a \emph{minimal} separator in the case that if we remove any vertex from $C$, the separation does not hold; we define a \emph{maximal} separator similarly.  Let also $C_{ij}=\{ir,jr:\forall r\neq i,j\}$ and  $C^d_{ij}=V\setminus (C_{ij}\cup \{ij\})=\{lm:\forall l,m\notin\{i,j\}\}$. It holds that $C_{ij}$ is a minimal separator of $ij,kl$ in $L_{-}(n)$ and $C^d_{ij}\setminus\{kl\}$ is a maximal separator  in $L_{\leftrightarrow}(n)$:
\begin{prop}\label{prop:12}

\begin{enumerate}
  \item In $L_{-}(n)$, it holds that $ij\dse kl\cd C_{ij}$, and if $ij\dse kl\cd C$  then $|C_{ij}|\leq |C|$. In fact, if $C\neq C_{ij}$ and $C\neq C_{kl}$ then $|C_{ij}|< |C|$.
  \item In $L_{\leftrightarrow}(n)$, it holds that $ij\dse kl\cd C^d_{ij}\setminus\{kl\}$, and if $ij\dse kl\cd C$  then $|C^d_{ij}|\geq |C|+1$. In fact, if $C\neq C^d_{ij}\setminus\{kl\}$ and $C\neq C^d_{kl}\setminus\{ij\}$ then $|C^d_{ij}|> |C|+1$.
\end{enumerate}
\end{prop}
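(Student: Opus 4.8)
The plan is to prove part~1 (for the undirected graph $L_{-}(n)$) directly and then to read off part~2 (for $L_{\leftrightarrow}(n)$) from it via the duality of Lemma~\ref{lem:5}, so that no separate combinatorics is needed for the bidirected case. The engine of the whole argument is a translation between separation in the incidence graph $L_{-}(n)=L(K_n)$ and edge cuts in the complete graph $K_n$ on the $n$ nodes of $\nodeset$. Concretely, for a set of dyads $C$ with $ij,kl\notin C$, I would first establish that $C$ separates the vertices $ij$ and $kl$ in $L_{-}(n)$ if and only if, viewing $C$ as a set of edges of $K_n$, the node sets $\{i,j\}$ and $\{k,l\}$ lie in \emph{different} connected components of $K_n\setminus C$. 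Both directions are routine: a connecting path $ij=d_0,d_1,\dots,d_m=kl$ in $L_{-}(n)$ avoiding $C$ is precisely a sequence of edges of $K_n\setminus C$ in which consecutive edges share an endpoint, hence traces out a connected subgraph of $K_n\setminus C$ joining $\{i,j\}$ to $\{k,l\}$; conversely, any path in $K_n\setminus C$ between $\{i,j\}$ and $\{k,l\}$, with $ij$ prepended and $kl$ appended, yields such a walk.

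Granting this correspondence, part~1 becomes a minimum-edge-cut computation. For part~1(a), observe that $C_{ij}$ is exactly the set of edges of $K_n$ with one endpoint in $\{i,j\}$, i.e.\ the edge boundary $\partial(\{i,j\})$ (edges with exactly one endpoint in $\{i,j\}$); removing it isolates $\{i,j\}$, so $ij\dse kl\cd C_{ij}$ and $|C_{ij}|=2(n-2)$. For parts~1(b) and~1(c) I would take an arbitrary separator $C$, let $S$ be the set of nodes reachable from $\{i,j\}$ in $K_n\setminus C$ (so $\{i,j\}\subseteq S$ and $k,l\notin S$), and note that every edge of $\partial(S)$ must lie in $C$, whence $|C|\ge|\partial(S)|$. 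Writing $S=\{i,j\}\cup R$ with $R$ a set of $p$ of the $n-4$ remaining nodes, a direct count gives
\begin{equation*}
|\partial(S)|=2(n-2)+p(n-4-p).
\end{equation*}
Since $p(n-4-p)\ge0$, this yields $|C|\ge 2(n-2)=|C_{ij}|$, which is part~1(b). Furthermore $p(n-4-p)=0$ exactly when $p=0$ or $p=n-4$, i.e.\ $S=\{i,j\}$ or $S=\nodeset\setminus\{k,l\}$, giving $\partial(S)=C_{ij}$ or $\partial(S)=C_{kl}$; so if $|C|=2(n-2)$ then $\partial(S)=C\in\{C_{ij},C_{kl}\}$, and otherwise $|C|>2(n-2)$, which is part~1(c).

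For part~2 I would invoke Lemma~\ref{lem:5}: since $\sk(L_{\leftrightarrow}(n))=\sk(L_{-}(n))$, we have $ij\dse kl\cd C$ in $L_{\leftrightarrow}(n)$ if and only if $ij\dse kl\cd D$ in $L_{-}(n)$, with $D=V\setminus(\{ij,kl\}\cup C)$. A short set computation gives $D=C_{ij}\iff C=C^d_{ij}\setminus\{kl\}$ and $D=C_{kl}\iff C=C^d_{kl}\setminus\{ij\}$, so the distinguished maximal separators of part~2 are the duals of the distinguished minimal ones of part~1. Translating sizes through $|D|=\binom{n}{2}-2-|C|$ together with the identity $\binom{n}{2}-2-2(n-2)=\binom{n-2}{2}-1=|C^d_{ij}|-1$, the bound $|D|\ge|C_{ij}|$ becomes $|C^d_{ij}|\ge|C|+1$, and the strict bound $|D|>|C_{ij}|$ (valid when $D\neq C_{ij},C_{kl}$) becomes $|C^d_{ij}|>|C|+1$, exactly as claimed.

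The hard part will be part~1(c): showing that $C_{ij}$ and $C_{kl}$ are the \emph{only} minimum separators, not merely that they are of minimum size. The line-graph/edge-cut reduction is what makes this tractable, converting an opaque separation question into the transparent quadratic $p(n-4-p)$, whose vanishing only at the endpoints $p=0,n-4$ pins down precisely those two cuts; I would take care to verify both directions of the reduction. I would also note the small-$n$ degeneracy (for $n=4$ one has $\{i,j\}=\nodeset\setminus\{k,l\}$, hence $C_{ij}=C_{kl}$ and a single minimum separator), which is handled automatically since then only $p=0$ is available.
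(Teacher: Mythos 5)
Your proof is correct, and it takes a genuinely different route from the paper's. The paper argues directly inside the line graph: for the bound $|C_{ij}|\leq |C|$ it constructs a local injection (each vertex $im$ or $jm$ of $C_{ij}$ missing from $C$ forces $km,lm\in C\cap C_{kl}$, and $C_{ij},C_{kl}$ are disjoint), and for the strict inequality it runs a proof by contradiction with explicit connecting paths such as $\langle kl,kh,mh,jm,ij\rangle$, treating $n=3,4,5$ as separate cases to be checked. You instead pull the whole problem back to the complete graph $K_n$ on the node set, observing that a set of dyads $C$ separates $ij$ and $kl$ in $L_{-}(n)$ exactly when it contains the edge boundary $\partial(S)$ of the component $S\supseteq\{i,j\}$ of $K_n\setminus C$; the count $|\partial(S)|=2(n-2)+p(n-4-p)$ with $0\leq p\leq n-4$ then delivers the minimum, the bound, and the uniqueness of the two minimum separators in one stroke, uniformly in $n$ (including the $n=4$ degeneracy where $C_{ij}=C_{kl}$). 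What your approach buys is a cleaner and fully rigorous treatment of the third claim --- the paper's matching argument and its ``easy to check'' small cases are replaced by the transparent vanishing of the quadratic $p(n-4-p)$ only at its endpoints --- at the price of having to verify the line-graph/edge-cut correspondence, which you do in both directions. The derivation of part 2 from part 1 via Lemma \ref{lem:5} and the identity $\binom{n}{2}-2-2(n-2)=|C^d_{ij}|-1$ is the same duality step the paper uses.
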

\begin{proof}
1.\ The first claim is straightforward to prove since every path from $kl$ to $ij$ must pass through an adjacent vertex of $ij$, which contains $i$ or $j$. To prove the second statement, notice that if $im\notin C$ then at least $km$ and $lm$ must be in $C$. The same vertices $km$ and $lm$ appear if $jm\notin C$ too, but for no other vertices of $C_{ij}$ missing in $C$. Thus, for every missing member of $C_{ij}$ in $C$, there is at least a member of $C_{kl}$ that should be in $C$. Hence, $|C_{ij}|\leq |C|$.

To prove the third statement, suppose, for contradiction, that $C\neq C_{ij}$ and $C\neq C_{kl}$ and $|C|=|C_{ij}|$. Using the fact that, for every missing member of $C_{ij}$ in $C$, there is at least a member of $C_{kl}$ that should be in $C$, there cannot be any vertex outside $C_{ij}\cup C_{kl}$ in a $C$. Hence, $C\subset C_{ij}\cup C_{kl}$. If $n>5$ and, say, $im,jm\notin C$ but $km,lm\in C$ then consider the vertices $ih,jh,kh,lh$. Without loss of generality, assume that $ih,jh\in C$ but $kh,lh\notin C$. Then the path $\langle kl,kh,mh,jm,ij\rangle$ connects $kl$ and $ij$, a contradiction. The cases where $n=3,4,5$ are easy to check.

2.\ The proof follows from 1.\ by using the duality (Lemma \ref{lem:5}), and observing that $|C^d_{ij}\setminus\{kl\}|=|C^d_{ij}|-1$.
\end{proof}
However, not all minimal separators of $ij,kl$ in $L_{-}(n)$ are of the form above. For example, in $L_{-}(6)$, consider the set $C=\{ik,il,im,jk,jl,jm,hk,hl,mh\}$. It holds that $ij\dse kl\cd C$. However, $C$ is not of the form $C_{pq}$ for any pair $p,q\in\{i,j,k,l,m,h\}$. The same can be said about $L_{-}(n)$: Consider $C^d=V\setminus (C\cup \{ij,kl\})$. By Lemma \ref{lem:5}, we have that $ij\dse kl\cd C^d$ in $L_{\leftrightarrow}(6)$, and, in addition, $C^d$ is maximal. 
\begin{prop}\label{prop:3n0}
Let a distribution $P$ be defined over an exchangeable random network and $\sk(P)=L_{-}(n)$, and consider some nodes $i,j,k,l$.
\begin{enumerate}
  \item Suppose that for every minimal $C$ such that $ij\ci kl\cd C$, it holds that $C_{ij,kl}\subseteq C$ and $C$ is invariant under swapping $k$ and $m$, and $l$ and $h$, for every $m,h$, $m\neq h$, $mh\notin C\cup\{ij,kl\}$. It then holds that if $P$ satisfies composition then it satisfies upward-stability and singleton-transitivity.
  \item Suppose that for every maximal $C$ such that $ij\ci kl\cd C$, it holds that $C_{ij,kl}\cap C=\varnothing$ and $C$ is invariant under swapping $k$ and $m$, and $l$ and $h$, for every $m,h$, $m\neq h$, $mh\in C$. It then holds that if $P$ satisfies intersection then it satisfies downward-stability and singleton-transitivity.
\end{enumerate}
\end{prop}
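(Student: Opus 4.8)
The plan is to prove part 1 (the undirected case) in full and then obtain part 2 for free by duality. Passing from the independence model $\mathcal{J}(P)$ to its dual interchanges composition with intersection and upward- with downward-stability (Lemma \ref{lem:dual}), fixes singleton-transitivity, and---since dualising sends $C$ to $V\setminus(\{ij,kl\}\cup C)$---turns minimal separators into maximal ones and the containment $C_{ij,kl}\subseteq C$ into the disjointness $C_{ij,kl}\cap C=\varnothing$. Because dualising commutes with node relabelling and preserves the skeleton, the dual model is again an exchangeable semi-graphoid with skeleton $L_{-}(n)$ satisfying the hypotheses of part 1; hence the argument for part 1, which uses only these features, applies verbatim to it and yields part 2. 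So from now on I would work only with part 1.

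The engine of the proof is a \emph{swap lemma}: if $ij\ci kl\cd C$ and $C$ is invariant under the permutation $\pi=(k\,p)(l\,q)$ for a dyad $w=pq$ with $p,q\notin\{i,j\}$ and $w\notin C\cup\{ij,kl\}$, then $\pi$ fixes $ij$ and sends $kl$ to $w$, so exchangeability gives $ij\ci w\cd C$. Composition then yields $ij\ci\{kl,w\}\cd C$, and weak union yields both $ij\ci kl\cd C\cup\{w\}$ and $ij\ci w\cd C\cup\{kl\}$. This is exactly the mechanism that converts the hypothesis of composition into a controlled enlargement of the conditioning set, as long as the dyads being moved do not meet $ij$.

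The crucial structural step---and the one I expect to be the main obstacle---is to show that every minimal $C$ with $ij\ci kl\cd C$ in fact contains $C_{ij}$, the set of all dyads meeting $i$ or $j$ other than $ij$ itself. The seed is the hypothesis $C_{ij,kl}=\{ik,il,jk,jl\}\subseteq C$; the invariance hypothesis then propagates membership, since applying a swap $(k\,m)(l\,h)$ to $ik\in C$ produces $im\in C$, and so across all of $C_{ij}$. The delicate point is the ``blocked'' configuration, in which every swap reaching a given far node is unavailable because $C$ already contains all dyads through that node; here one uses that invariance is required even for swaps that relabel $i$ or $j$, and that such a configuration would force $C$ to coincide with the $kl$-centred separator $C_{kl}$, which is not invariant under swapping $k,l$ out and hence is excluded by hypothesis. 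Making this exclusion rigorous uniformly in $n$, rather than case-checking small $n$ as in Proposition \ref{prop:12}, is where the real work lies.

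Granting $C\supseteq C_{ij}$ for minimal separators, the rest assembles cleanly. Since any conditioning set of $ij\ci kl$ contains a minimal one, every such set contains $C_{ij}$, so any dyad $w\notin C$ necessarily misses $ij$. For \emph{upward-stability}, given $ij\ci kl\cd C$ and $w\notin C$, I would pick a minimal $C'\subseteq C$; every dyad in $(C\setminus C')\cup\{w\}$ misses $ij$ (those meeting $ij$ lie in $C_{ij}\subseteq C'$) and differs from $kl$, so the swap lemma over the invariant $C'$ gives $ij\ci d\cd C'$ for each such $d$, and composition followed by weak union delivers $ij\ci kl\cd C\cup\{w\}$. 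For \emph{singleton-transitivity}, given $ij\ci kl\cd C$ and $ij\ci kl\cd C\cup\{w\}$ with $w\notin C$, the same observation shows $w$ misses $ij$, so the swap lemma over $C'$ gives $ij\ci w\cd C'$, and upward-stability raises this to $ij\ci w\cd C$, which is exactly the first disjunct of the conclusion. Together with composition, this is precisely what Proposition \ref{prop:und}(1) requires for faithfulness to $L_{-}(n)$.
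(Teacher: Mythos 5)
Your overall architecture matches the paper's: part 2 by duality via Lemmas \ref{lem:dual} and \ref{lem:5}, and the ``swap lemma'' (exchangeability plus invariance of $C$ gives $ij\ci mh\cd C$, then composition and weak union) is exactly the paper's engine. Your assembly of upward-stability by composing over all of $(C\setminus C')\cup\{w\}$ at once, rather than the paper's induction on $|C|$, would even be a mild streamlining --- if the swap lemma applied to every dyad involved.

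The genuine gap is the step you yourself flag as ``where the real work lies'': the claim that every minimal $C$ with $ij\ci kl\cd C$ contains all of $C_{ij}$, so that no dyad outside $C$ meets $i$ or $j$. This claim is not only unproven in your sketch; it is false under the stated hypotheses. The paper's own example just before the proposition provides a counterexample: in $L_{-}(6)$ the set $C=\{ik,il,im,jk,jl,jm,kh,lh,mh\}$ is a minimal separator of $ij$ and $kl$, it contains $C_{ij,kl}$, and one can check it is invariant under the swaps permitted by the hypothesis (the only dyads outside $C\cup\{ij,kl\}$ not meeting $\{i,j\}$ are $km$ and $lm$, whose associated swaps $(lm)$ and $(km)$ fix $C$), yet $ih,jh\notin C$. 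Your propagation argument cannot reach $ih$ because every swap that would produce it either uses a dyad already in $C$ (namely $kh$, $lh$, or $mh$) or relabels $i$ or $j$. Consequently your treatment of a dyad $w$ meeting $\{i,j\}$ --- and of dyads of $C\setminus C'$ meeting $\{i,j\}$ --- collapses, since the swap lemma's hypothesis $p,q\notin\{i,j\}$ genuinely fails there. The paper closes exactly this case differently: when $mh$ meets $\{i,j\}$ it applies the permutation exchanging $i$ with $k$ and $j$ with $l$, which (by symmetry of the statement and exchangeability) carries $ij\ci kl\cd C$ to $ij\ci kl\cd \pi(C)$ with $\pi(C)$ again a minimal set satisfying the hypotheses, and carries $mh$ to a dyad avoiding $\{i,j\}$; one then runs the swap argument there and permutes back. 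Replacing your structural claim by this role-swap device would repair the proof; as written, the argument does not go through.
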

\begin{proof}
By Lemma \ref{lem:2}, we know that $i,j,k,l$ are all different.

1.\ First, we prove upward-stability. Suppose that $ij\ci kl\cd C$. Notice that, because of exchangeability, the assumptions of the statement hold for every $i,j,k,l,m,h$. For a variable $mh\notin C\cup\{ij,kl\}$, we prove that $ij\ci kl\cd C\cup\{mh\}$ by induction on $|C|$. Notice that, by assumption, $mh\notin C_{ij,kl}$. In addition, without loss of generality, we can assume that $m,h\neq i,j$ since otherwise we can swap $i$ and $k$ and $j$ and $l$ and proceed as follows, and finally swap them back.

%

The base case is when $C$ is minimal. Because of exchangeability, by the invariance of $C$ under the aforementioned swaps, and since $mh\notin C_{ij,kl}$, we have that $ij\ci mh\cd C$. By composition $ij\ci \{kl,mh\}\cd C$, which, by weak union, implies $ij\ci kl\cd C\cup\{mh\}$.

Now suppose that $ij\ci kl\cd C$, $C$ is not minimal, and, for every  $C'$ such that $|C'|<|C|$,  $[ij\ci kl\cd C'\Rightarrow ij\ci kl\cd C'\cup\{op\}]$, for every $op$. Consider the independence statement $ij\ci kl\cd C_0$ such that $C_0\subset C$ and $C_0$ is minimal. We have that $ij\ci mh\cd C_0$. By induction hypothesis, we can add vertices to the conditioning set in order to obtain $ij\ci mh\cd C$.
Now, again composition and weak union imply the result.

Singleton-transitivity also follows from the above argument. We need to show that $ij\ci kl\cd C$ and $ij\ci kl\cd C\cup\{mh\}$ imply $ij\ci mh\cd C$ or $mh\ci kl\cd C$. (Notice that because of upward-stability $ij\ci kl\cd C\cup\{mh\}$ is immaterial.) Again without loss of generality, we can assume that $m,h\neq i,j$, and the above argument showed that $ij\ci mh\cd C$.

2.\ The proof follows from part 1.\ and the duality (Lemma \ref{lem:dual}).
%
%
%
%
%
\end{proof}
\begin{theorem}
Let a distribution $P$ be defined over an exchangeable random network and $\sk(P)=L_{-}(n)$, and consider some nodes $i,j,k,l$.
\begin{enumerate}
  \item Suppose that for every minimal $C$ such that $ij\ci kl\cd C$, it holds that $C_{ij,kl}\subseteq C$ and $C$ is invariant under swapping $k$ and $m$, and $l$ and $h$, for every $m,h$, $m\neq h$, $mh\notin C\cup\{ij,kl\}$. Then $P$ is faithful to $L_{-}(n)$ if and only if $P$ satisfies the intersection and composition properties.
  \item  Suppose that for  every maximal $C$ such that $ij\ci kl\cd C$, it holds that $C_{ij,kl}\cap C=\varnothing$ and $C$ is invariant under swapping $k$ and $m$, and $l$ and $h$, for every $m,h$, $m\neq h$, $mh\in C$. Then $P$ is faithful to $L_{\leftrightarrow}(n)$ if and only if $P$ satisfies the intersection and composition properties.
\end{enumerate}
\end{theorem}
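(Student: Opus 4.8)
The plan is to recognize that this theorem is an assembly of machinery already in place, the genuine work having been carried out in Proposition~\ref{prop:3n0}. The standing assumption $\sk(P)=L_{-}(n)$ pins down the induced graphs: by the definitions in Section~\ref{sec:23}, $G_u(P)=\sk(P)=L_{-}(n)$, while $G_b(P)$ is $\sk(P)$ with all edges bidirected, namely $L_{\leftrightarrow}(n)$. Consequently faithfulness of $P$ to $L_{-}(n)$ is precisely faithfulness of $P$ to $G_u(P)$, and faithfulness of $P$ to $L_{\leftrightarrow}(n)$ is precisely faithfulness of $P$ to $G_b(P)$. This is what lets me invoke Proposition~\ref{prop:und} directly in both parts.

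For part~1 I would treat each direction separately. For ($\Rightarrow$), suppose $P$ is faithful to $L_{-}(n)=G_u(P)$; then Proposition~\ref{prop:und}(1.) yields intersection, singleton-transitivity, and upward-stability, and Lemma~\ref{lem:1}(1.) upgrades upward-stability to composition, so intersection and composition both hold. For ($\Leftarrow$), suppose $P$ satisfies intersection and composition. Here the minimal-separator hypothesis is exactly the input required by Proposition~\ref{prop:3n0}(1.), which converts composition into upward-stability together with singleton-transitivity; combined with the assumed intersection, all three hypotheses of Proposition~\ref{prop:und}(1.) are met, giving faithfulness to $G_u(P)=L_{-}(n)$.

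Part~2 is the dual mirror of part~1. I would present it by the identical argument with the bidirected counterparts: for ($\Rightarrow$), faithfulness to $L_{\leftrightarrow}(n)=G_b(P)$ gives, via Proposition~\ref{prop:und}(2.), composition, singleton-transitivity, and downward-stability, and Lemma~\ref{lem:1}(2.) turns downward-stability into intersection; for ($\Leftarrow$), the maximal-separator hypothesis lets me apply Proposition~\ref{prop:3n0}(2.) to extract downward-stability and singleton-transitivity from intersection, whereupon Proposition~\ref{prop:und}(2.) delivers faithfulness to $G_b(P)=L_{\leftrightarrow}(n)$. Since $L_{-}(n)$ and $L_{\leftrightarrow}(n)$ share a skeleton, an equally valid route is to read part~2 off from part~1 by passing to the dual independence model through Lemmas~\ref{lem:dual} and~\ref{lem:5}, which interchange intersection with composition and upward- with downward-stability throughout.

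The step I expect to carry the real weight is the reverse direction, and specifically the appeal to Proposition~\ref{prop:3n0}; the forward direction uses neither separator hypothesis and is pure bookkeeping among the equivalent forms of the stability and composition properties. The one point deserving care is that the structural hypotheses are stated at exactly the level Proposition~\ref{prop:3n0} consumes, so that no extra regularity on $P$ (such as positivity) is silently invoked. In particular, singleton-transitivity --- the property whose failure in Example~\ref{ex:2} blocked faithfulness there --- is recovered for free from Proposition~\ref{prop:3n0} under these hypotheses, so the theorem follows with no remaining obstruction.
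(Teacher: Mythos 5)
Your proposal is correct and matches the paper's own argument, which likewise derives the theorem by combining Proposition~\ref{prop:3n0} (to obtain upward-/downward-stability and singleton-transitivity from composition/intersection under the separator hypotheses) with Proposition~\ref{prop:und}; your explicit use of Lemma~\ref{lem:1} for the forward direction just fills in a step the paper leaves implicit. No gaps.
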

\begin{proof}
The proof follows from Propositions \ref{prop:3n0} and \ref{prop:und}.
\end{proof}

\subsection{The complement of the incidence graph case}
In this section, we assume that $\sk(P)=L^c_{-}(n)$. We show again that, under certain assumptions, intersection and composition are sufficient for faithfulness. We again utilize the duality of the two additional regimes in the complement of the incidence graph case related to the cases of Theorem \ref{thm:2}: the undirected complement of the incidence graph case and the bidirected complement of incidence graph case. Let $C^d_{ijk}=\{lm:\forall l,m\notin\{i,j,k\}\}$. For the faithfulness results to the, respectively, undirected and bidirected case, an assumption that is used is that for some (and because of exchangeability for all) $i,j,k$, $ij\ci ik\cd C$ implies that $C^d_{ijk}\subseteq C$ for undirected case and  $C^d_{ijk}\cap C=\varnothing$ for the bidirected case. 

Let $C_j=\{jr:\forall r\neq j\}$. Recall also that $C_{ij}=\{ir,jr:\forall r\neq i,j\}$ and $C^d_{ij}=\{lm:\forall l,m\notin\{i,j\}\}$. $C^d_{ij}$ is a minimal separator of $ij,ik$ in $L^c_{-}(n)$, and $C_{ij}\setminus\{ik\}$ is a maximal separator in $L^c_{\leftrightarrow}(n)$:
\begin{prop}\label{prop:12n}
Let $n>4$.
\begin{enumerate}
  \item In $L^c_{-}(n)$, it holds that $ij\dse ik\cd C^d_{ij}$, and if $ij\dse ik\cd C$  then $|C^d_{ij}|\leq |C|$. In fact, if $C\neq C^d_{ij}$ and $C\neq C^d_{ik}$ then $|C^d_{ij}|< |C|$.
  \item In $L^c_{\leftrightarrow}(n)$, $n>4$, it holds that $ij\dse ik\cd C_{ij}\setminus\{ik\}$, and if $ij\dse ik\cd C$  then $|C_{ij}|\geq |C|+1$. In fact, if $C\neq C_{ij}\setminus\{ik\}$ and $C\neq C^d_{ik}\setminus\{ij\}$ then $|C^d_{ij}|> |C|+1$.
\end{enumerate}
\end{prop}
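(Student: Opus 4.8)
The plan is to prove part 1 in the undirected graph $L^c_{-}(n)$ directly, and then deduce part 2 in $L^c_{\leftrightarrow}(n)$ by duality, exactly as part 2 of Proposition \ref{prop:12} is deduced from its part 1. Since $\sk(L^c_{-}(n))=\sk(L^c_{\leftrightarrow}(n))$, Lemma \ref{lem:5} turns the statement $ij\dse ik\cd C$ in one graph into $ij\dse ik\cd V\setminus(\{ij,ik\}\cup C)$ in the other; and the dual of the canonical separator $C^d_{ij}$ of part 1 is exactly $C_{ij}\setminus\{ik\}$, because $V\setminus C^d_{ij}$ is the set of dyads meeting $\{i,j\}$, namely $C_{ij}\cup\{ij\}$. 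Under this correspondence minimum separators become maximum ones and $|C^d_{ij}|$ becomes $|C_{ij}|-1$, which converts every inequality of part 1 into the corresponding inequality of part 2. Thus all the work is in part 1. Moreover, the non-adjacent pairs of dyads in $L^c(n)$ are exactly those sharing a node, so it suffices to treat the pair $ij,ik$.

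First I would observe that in $L^c(n)$ the neighbourhood $N(ij)$ consists precisely of the dyads disjoint from $\{i,j\}$, so $N(ij)=C^d_{ij}$, and $ik\notin N(ij)$ since $ij$ and $ik$ share $i$. Every walk leaving $ij$ therefore enters $C^d_{ij}$ at its first step, giving $ij\dse ik\cd C^d_{ij}$. For the bound $|C|\geq|C^d_{ij}|$ over all separators $C$, I would combine two observations. For each dyad $pq$ avoiding $i,j,k$ (that is, $pq\in C^d_{ijk}$), the path $\langle ij,pq,ik\rangle$ has $pq$ as its only inner vertex, so $pq\in C$; hence $C^d_{ijk}\subseteq C$, accounting for $\binom{n-3}{2}$ vertices. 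The remaining neighbours of $ij$ are the $n-3$ dyads $kr$, $r\in R:=\nodeset\setminus\{i,j,k\}$, and for each I route a path $\langle ij,kr,js_r,ik\rangle$, where $js_r$ is disjoint from both $\{k,r\}$ and $\{i,k\}$ and hence adjacent to $kr$ and to $ik$. Taking $r\mapsto s_r$ to be a cyclic shift of $R$ makes this an injective choice with $s_r\in R\setminus\{r\}$, which is possible precisely because $|R|=n-3\geq 2$, i.e.\ $n>4$; the inner vertices $kr,js_r$ are then pairwise distinct and lie outside $C^d_{ijk}$. These $n-3$ internally disjoint paths each force a further, distinct vertex of $C$, so $|C|\geq\binom{n-3}{2}+(n-3)=\binom{n-2}{2}=|C^d_{ij}|$.

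To obtain the strict inequality I would classify the minimum separators. If $|C|=|C^d_{ij}|$, then $C^d_{ijk}\subseteq C$ and $|C\setminus C^d_{ijk}|=n-3$. If no neighbour $kr$ of $ij$ is omitted from $C$, then $N(ij)=C^d_{ij}\subseteq C$, forcing $C=C^d_{ij}$; symmetrically, if no neighbour $jr'$ of $ik$ is omitted, then $C=C^d_{ik}$. In the remaining case some $kr\notin C$ and some $jr'\notin C$: to block all paths $\langle ij,kr,js,ik\rangle$, $s\in R\setminus\{r\}$, the $n-4$ dyads $js$ must lie in $C$, and symmetrically the $n-4$ dyads $ks$ must lie in $C$; as these two families are disjoint, $|C\setminus C^d_{ijk}|\geq 2(n-4)$, which exceeds $n-3$ once $n\geq 6$, a contradiction. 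Thus for $n\geq 6$ the only minimum separators are $C^d_{ij}$ and $C^d_{ik}$, giving $|C^d_{ij}|<|C|$ whenever $C$ is neither; the boundary case $n=5$ I would settle by direct inspection, as the small cases are handled in Proposition \ref{prop:12}, checking that the one remaining candidate admits an open length-four path and so is not a separator.

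The step I expect to be the main obstacle is the classification of the minimum separators, and hence the strict inequality: the forcing argument through length-three paths is necessary but not by itself sufficient at the boundary $n=5$, where longer connecting paths must be excluded by hand. By contrast, recognising $C^d_{ij}$ as the neighbourhood $N(ij)$ makes both the separation claim and the counting lower bound routine, and the passage to the bidirected graph is a purely formal application of Lemmas \ref{lem:dual} and \ref{lem:5}.
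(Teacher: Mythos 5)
Your proposal is correct and follows the same overall skeleton as the paper's proof: part 1 is argued directly in $L^c_{-}(n)$ (with $C^d_{ij}$ recognised as the full neighbourhood of $ij$, which gives the separation claim immediately), and part 2 is obtained by the duality of Lemma \ref{lem:5} together with $|C_{ij}\setminus\{ik\}|=|C_{ij}|-1$, exactly as in the paper. Where you genuinely diverge is in the two counting steps. For the bound $|C^d_{ij}|\leq|C|$, the paper compares the non-common neighbourhoods $C^d_{ij}\setminus C^d_{ik}=C_k\setminus\{ik,jk\}$ and $C^d_{ik}\setminus C^d_{ij}=C_j\setminus\{ij,jk\}$ and invokes Hall's marriage theorem to produce, for each omitted neighbour of $ij$, a distinct compensating vertex of $C$; you instead force $C^d_{ijk}\subseteq C$ via the length-two paths $\langle ij,pq,ik\rangle$ and then exhibit $n-3$ internally disjoint length-three paths $\langle ij,kr,js_r,ik\rangle$ through a fixed-point-free bijection of $R$, a Menger-style argument that is more explicit and avoids Hall's theorem (whose hypothesis the paper only asserts with a ``clearly''). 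For the strict inequality, the paper deduces $C\subseteq C^d_{ij}\cup C^d_{ik}$ and then kills the remaining candidates with the two explicit connecting paths $\langle ik,jo,km,ij\rangle$ and $\langle ik,jm,il,km,ij\rangle$, an argument uniform in $n>4$; you instead count $2(n-4)$ forced vertices against the available $n-3$, which settles $n\geq 6$ cleanly but leaves $n=5$ to a separate inspection --- note that the path you would need there is precisely the paper's $\langle ik,jm,il,km,ij\rangle$, so nothing is missing, only deferred. Both routes are sound; yours buys a more self-contained lower bound at the cost of a case split at the boundary.
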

\begin{proof}
1.\ The first claim is straightforward to prove since every path from $ik$ to $ij$ must pass through an adjacent vertex of $ij$, which does not contain $i$ or $j$. This set is $C^d_{ij}$. To prove the second statement, consider the sets $C^d_{ij}\setminus C^d_{ik}=C_k\setminus\{ik,jk\}$ and $C^d_{ik}\setminus C^d_{ij}=C_j\setminus\{ij,jk\}$, i.e., the neighbours of $ij$ and $ik$ with the joint neighbours removed. For every subset $S$ of $C_k\setminus\{ik,jk\}$, clearly there are at least the same number of vertices in $C_j\setminus\{ij,jk\}$ adjacent to members of $S$. Hence, the Hall's marriage theorem \cite{hal35} implies the result.

To prove the third statement, suppose, for contradiction, that $C\neq C^d_{ij}$ and $C\neq C^d_{ik}$ and $|C|=|C^d_{ij}|$. Using the fact that, for every missing member of $C^d_{ij}$ in $C$, there is at least a member of $C^d_{ik}$ that should be in $C$, there cannot be any vertex outside $C^d_{ij}\cup C^d_{ik}$ in a $C$. Hence, $C\subset C^d_{ij}\cup C^d_{ik}$. If $n>4$ and, $km,jo\notin C$, where $o\neq m$,  the path $\langle ik,jo,km,ij\rangle$ connects $ik$ and $ij$, a contradiction. Thus, say, $km,jm\notin C$ (which implies $jl,kl\in C$). Then the path $\langle ik,jm,il,km,ij\rangle$ connects $ik$ and $ij$, a contradiction.

2.\ The proof follows from the previous part and Lemma \ref{lem:5}, and observing that $|C_{ij}\setminus\{ik\}|=|C_{ij}|-1$.
\end{proof}
However, not all minimal separators of $ij,ik$ in $L^c_{-}(n)$ are of the form above. For example, in $L^c_{-}(5)$, consider the set $C=\{kl,jl,il,lm\}$. It holds that $ij\dse ik\cd C$. In addition, $C$ is minimal. However, $C$ is not of the form in the above proposition. In $L^c_{\leftrightarrow}(5)$, for the set $C^d=V\setminus (C\cup\{ij,ik\})$, by using Lemma \ref{lem:5}, we see that $ij\dse ik\cd C^d$, and $C^d$ is maximal.
\begin{prop}\label{prop:3nn0}
Let a distribution $P$ be defined over an exchangeable random network and $\sk(P)=L_{-}^c(n)$, and consider some nodes $i,j,k$.
\begin{enumerate}
  \item Suppose that for every minimal $C$ such that $ij\ci ik\cd C$,  $C^d_{ijk}\subseteq C$ and $C$ is invariant under swapping $k$ and $m$ for every $m$ with an $l$ such that $lm\notin C$. It then holds that if $P$ satisfies composition then it satisfies upward-stability and singleton-transitivity.
  \item Suppose that for  every maximal $C$ such that $ij\ci ik\cd C$, $C^d_{ijk}\cap C=\varnothing$ and $C$ is invariant under swapping $k$ and $m$  for every $m$ with an $l$ such that $lm\in C$. It then holds that if $P$ satisfies intersection then it satisfies downward-stability and singleton-transitivity.
\end{enumerate}
\end{prop}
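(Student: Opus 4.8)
The plan is to transport the argument of Proposition~\ref{prop:3n0} from the incidence-graph skeleton to its complement. By Lemma~\ref{lem:2}, in the present regime every relevant statement has the form $ij\ci ik\cd C$ with $i,j,k$ distinct, the two dyads sharing the node $i$. I will prove only part~1 (composition $\Rightarrow$ upward-stability and singleton-transitivity); part~2 then follows word for word by the duality of Lemma~\ref{lem:dual}, since the maximal-separator hypothesis with $C^d_{ijk}\cap C=\varnothing$ is dual to the minimal-separator hypothesis with $C^d_{ijk}\subseteq C$, exactly as part~2 of Proposition~\ref{prop:3n0} is deduced from its part~1.

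The key structural remark is that the hypothesis $C^d_{ijk}\subseteq C$ is what makes composition applicable. Any dyad $w$ that is a candidate for enlarging the conditioning set, i.e.\ $w\notin C\cup\{ij,ik\}$, must meet one of the nodes $i,j,k$, because every dyad avoiding all three lies in $C^d_{ijk}\subseteq C$. Hence $w$ shares a node with $ij$ or with $ik$, which is precisely the configuration in which an auxiliary statement $ij\ci w\cd C$ (respectively $ik\ci w\cd C$) is admissible and composition can be invoked. I would then prove upward-stability by induction on $|C|$, as in Proposition~\ref{prop:3n0}. In the base case $C$ is minimal: applying the transposition $(k\,m)$, under which $C$ is invariant by hypothesis and which sends $ik$ to $im$ while fixing $ij$, exchangeability delivers $ij\ci im\cd C$; composition with $ij\ci ik\cd C$ gives $ij\ci\{ik,im\}\cd C$, and weak union yields $ij\ci ik\cd C\cup\{im\}$. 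For non-minimal $C$ I pass to a minimal $C_0\subset C$ with $ij\ci ik\cd C_0$, derive the auxiliary statement at level $C_0$, raise its conditioning set up to $C$ using the induction hypothesis, and close with composition and weak union. Singleton-transitivity is read off the same computation: given $ij\ci ik\cd C$ (the second premise $ij\ci ik\cd C\cup\{w\}$ being redundant once upward-stability holds), the derived $ij\ci im\cd C$ supplies the required disjunction.

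The main obstacle is the bookkeeping concealed in ``candidate dyad to add''. In the incidence case the reduction $m,h\neq i,j$ arranges for the added dyad to share no node with $ij$, so a single double transposition produces the needed auxiliary. Here the addable dyads split into two genuinely inequivalent incidence patterns: those sharing the common node $i$, handled directly by the swap $(k\,m)$ as above, and those sharing a free node $j$ or $k$. A dyad of the latter kind is not a single permutation image of $ij\ci ik\cd C$ with $C$ fixed, so the delicate part is to show that the invariance of $C$ under the whole family of transpositions $(k\,m)$---which generate the $C$-preserving relabelings of the nodes carrying dyads outside $C$---together with exchangeability and the constraint $C^d_{ijk}\subseteq C$, still produces the auxiliaries $ij\ci jm\cd C$ and $ik\ci km\cd C$ required to add those dyads by composition (pairing with $ij$ and with $ik$ respectively). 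Controlling exactly which free-node dyads can occur, and checking that the relabelings realizing them preserve $C$, is where the separator analysis of Proposition~\ref{prop:12n} must be used, and it is the step I expect to be the most intricate.
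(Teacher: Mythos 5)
Your overall architecture matches the paper's: induction on $|C|$ with minimal separators as the base case, auxiliary independence statements manufactured by exchangeability and the swap-invariance hypothesis, then composition and weak union, with part 2 obtained by duality (Lemma \ref{lem:dual}) and singleton-transitivity read off the same computation. Your treatment of an added dyad $im$ through the hub node $i$ --- the transposition $(k\,m)$ fixes $ij$ and $C$ and sends $ik$ to $im$, giving $ij\ci im\cd C$, then compose and weak-union --- is exactly the paper's argument for the case $l=i$.

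The gap is that you stop at the case you yourself flag as the hard one: dyads $jm$ or $km$ meeting only a free node. You name plausible auxiliaries ($ij\ci jm\cd C$ and $ik\ci km\cd C$) but do not derive them, and you point to the separator analysis of Proposition \ref{prop:12n} as the missing ingredient, which is not where the paper goes --- that proposition is not invoked in this proof at all. The paper closes the case with permutation bookkeeping alone. First, $l=j$ is reduced to $l=k$ by the $j\leftrightarrow k$ symmetry of the statement $ij\ci ik\cd C$ (swap, argue, swap back). Then, for the dyad $km$: the invariance hypothesis applied with $m=i$ (legitimate since $ik\notin C$, so one may take $l=k$ there) makes $C$ invariant under the transposition $(i\,k)$, and exchangeability turns $ij\ci ik\cd C$ into $jk\ci ik\cd C$; since the statement is symmetric in $j$ and $k$, the hypothesis also yields invariance of $C$ under $(j\,m)$, and that transposition turns $jk\ci ik\cd C$ into $km\ci ik\cd C$; composing with $ij\ci ik\cd C$ on the common term $ik$ and applying weak union gives $ij\ci ik\cd C\cup\{km\}$. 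Until you supply this two-step chain of transpositions (or an equivalent derivation of the free-node auxiliaries), the base case of your induction is incomplete, and with it both upward-stability and the $km\ci ik\cd C$ disjunct of singleton-transitivity.
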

\begin{proof}
By Lemma \ref{lem:2}, we know that the form of independencies for $\sk(P)=L^c_{-}(n)$ is $ij\ci ik\cd C$, as provided in the statement of the proposition.

1.\ First, we prove upward-stability. Suppose that $ij\ci ik\cd C$. Notice that, because of exchangeability, the assumptions of the statement hold for every $i,j,k,l,m$. For a variable $lm\notin C\cup\{ij,ik\}$, we prove that $ij\ci ik\cd C\cup\{lm\}$ by induction on $|C|$. Notice that, by assumption, $lm\in C_{ijk}$. 
Without loss of generality, we can assume that $l\in\{i,j,k\}$, and further, $l=i$ or $l=k$ since otherwise we can swap $j$ and $k$ and proceed as follows, and finally swap them back.

The base case is when $C$ is minimal. We have two cases: If $l=i$ then because of exchangeability, by the invariance of $C$ under swapping $k$ and $m$, and since $im\notin C^d_{ijk}$, we have that $ij\ci im\cd C$. By composition $ij\ci \{ik,im\}\cd C$, which, by weak union, implies $ij\ci ik\cd C\cup\{im\}$.  If $l=k$ then by swapping $k$ and $i$, we have that $jk\ci ik\cd C$. Now, notice that the statement $ij\ci ik\cd C$ does not change under the $kj$-swap, which implies that $C$ is also invariant under swapping $j$ and $m$. By this swap, we have $km\ci ik\cd C$. By this, $ij\ci ik\cd C$, and the use of composition, we obtain $\{ij,km\}\ci ik\cd C$, which, by weak union, implies $ij\ci ik\cd C\cup\{km\}$.

Now suppose that $ij\ci ik\cd C$, $C$ is not minimal, and, for every  $C'$ such that $|C'|<|C|$,  $[ij\ci ik\cd C'\Rightarrow ij\ci ik\cd C'\cup\{op\}]$, for every $op$. Consider the independence statement $ij\ci ik\cd C_0$ such that $C_0\subset C$ and $C_0$ is minimal. We have that $ij\ci im\cd C_0$ or $km\ci ik\cd C_0$. By induction hypothesis, we can add vertices to the conditioning set in order to obtain $ij\ci im\cd C$ or $km\ci ik\cd C$.
Now, again composition and weak union imply the result.

Singleton-transitivity also follows from the above argument. We need to show that $ij\ci ik\cd C$ and $ij\ci ik\cd C\cup\{lm\}$ imply $ij\ci lm\cd C$ or $lm\ci ik\cd C$. (Notice that because of upward-stability $ij\ci ik\cd C\cup\{lm\}$ is immaterial.) Again without loss of generality,  we can assume that $l=i$ or $l=k$, and the above argument showed that $ij\ci im\cd C$ or $km\ci ik\cd C$.

2.\ The proof follows from part 1.\ and the duality (Lemma \ref{lem:dual}).
\end{proof}
\begin{theorem}
Let a distribution $P$ be defined over an exchangeable random network and $\sk(P)=L^c_{-}(n)$, and consider some nodes $i,j,k$.
\begin{enumerate}
  \item Suppose that for every minimal $C$ such that $ij\ci ik\cd C$,  $C^d_{ijk}\subseteq C$ and $C$ is invariant under swapping $k$ and $m$ for every $m$ with an $l$ such that $lm\notin C$. Then $P$ is faithful to $L^c_{-}(n)$ if and only if $P$ satisfies the intersection and composition properties.
  \item  Suppose that for  every maximal $C$ such that $ij\ci ik\cd C$, $C^d_{ijk}\cap C=\varnothing$ and $C$ is invariant under swapping $k$ and $m$  for every $m$ with an $l$ such that $lm\in C$. Then $P$ is faithful to $L^c_{\leftrightarrow}(n)$ if and only if $P$ satisfies the intersection and composition properties.
\end{enumerate}
\end{theorem}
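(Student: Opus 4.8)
The plan is to derive both parts as essentially immediate corollaries of Proposition \ref{prop:3nn0} combined with Proposition \ref{prop:und}, exactly paralleling the proof of the corresponding theorem in the incidence-graph case. The first point I would record is a bookkeeping observation from the definitions preceding Proposition \ref{prop:und}: since $\sk(P)=L^c_{-}(n)$, we have $G_u(P)=\sk(P)=L^c_{-}(n)$, while $G_b(P)$ is $\sk(P)$ with every edge made bidirected, i.e.\ $G_b(P)=L^c_{\leftrightarrow}(n)$. Consequently, faithfulness to $L^c_{-}(n)$ is literally faithfulness of $P$ and $G_u(P)$, and faithfulness to $L^c_{\leftrightarrow}(n)$ is literally faithfulness of $P$ and $G_b(P)$, so Proposition \ref{prop:und} applies directly once the relevant structural properties are established.

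For part 1 I would argue both implications. For the ``if'' direction, assume $P$ satisfies intersection and composition. Under the stated hypothesis on minimal separators, Proposition \ref{prop:3nn0}(1) upgrades composition to upward-stability together with singleton-transitivity; combined with intersection, this furnishes the three properties---intersection, singleton-transitivity, and upward-stability---that Proposition \ref{prop:und}(1) certifies as equivalent to faithfulness of $P$ and $G_u(P)=L^c_{-}(n)$. For the ``only if'' direction, faithfulness to $L^c_{-}(n)$ yields, via Proposition \ref{prop:und}(1), intersection, singleton-transitivity, and upward-stability; intersection is then already in hand, and upward-stability gives composition by Lemma \ref{lem:1}(1). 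This closes the biconditional.

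For part 2 I would run the dual argument, now targeting $G_b(P)=L^c_{\leftrightarrow}(n)$ and using Proposition \ref{prop:und}(2), which equates faithfulness with composition, singleton-transitivity, and downward-stability. For the ``if'' direction, intersection and composition together with Proposition \ref{prop:3nn0}(2)---whose maximal-separator hypothesis is precisely the one assumed in this part---promote intersection to downward-stability and singleton-transitivity, supplying all three required properties. For the ``only if'' direction, faithfulness gives composition, singleton-transitivity, and downward-stability by Proposition \ref{prop:und}(2), and downward-stability yields intersection by Lemma \ref{lem:1}(2). The duality of Lemmas \ref{lem:dual} and \ref{lem:5} is what makes the minimal/maximal-separator hypotheses and the upward/downward-stability conclusions of the two parts mirror each other, and it is already folded into the proof of Proposition \ref{prop:3nn0}(2).

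I do not expect a genuine obstacle at the level of this theorem: the substantive work has been carried out in Proposition \ref{prop:3nn0}, whose proof (the induction on $|C|$ reducing to a minimal separator, together with the $k\leftrightarrow m$ and $j\leftrightarrow k$ swapping arguments) is where the difficulty resides. The only points requiring care here are verifying that the structural hypotheses of the theorem are verbatim those invoked by Proposition \ref{prop:3nn0}, and confirming the identifications $G_u(P)=L^c_{-}(n)$ and $G_b(P)=L^c_{\leftrightarrow}(n)$; once these are in place, the theorem is a straightforward assembly of the two cited propositions and Lemma \ref{lem:1}.
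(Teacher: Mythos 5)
Your proposal is correct and follows essentially the same route as the paper, which proves the theorem by citing Propositions \ref{prop:3nn0} and \ref{prop:und}; you have simply spelled out the assembly (including the use of Lemma \ref{lem:1} for the ``only if'' directions and the identifications $G_u(P)=L^c_{-}(n)$, $G_b(P)=L^c_{\leftrightarrow}(n)$) that the paper leaves implicit.
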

\begin{proof}
The proof follows from Propositions \ref{prop:3nn0} and \ref{prop:und}.
\end{proof}

\subsection{The empty graph case}
Clearly every minimal separator in the empty graph is the empty set, and the maximal separator is all the remaining vertices. In the case where $\sk(P)$ is the empty graph, we have the following.
\begin{prop}\label{prop:3nnnn0}
Let a distribution $P$ be defined over an exchangeable random network, and $\sk(P)$ be the empty graph, and consider some nodes $i,j,k,l$.
\begin{enumerate}
  \item Suppose that $ij\ci kl$ and $ij\ci ik$ hold. It then holds that if $P$ satisfies composition then it satisfies upward-stability and singleton-transitivity.
  \item Suppose that $ij\ci kl\cd V\setminus\{ij,kl\}$ and $ij\ci ik\cd V\setminus\{ij,ik\}$ hold. It then holds that if $P$ satisfies intersection then it satisfies downward-stability and singleton-transitivity.
\end{enumerate}
\end{prop}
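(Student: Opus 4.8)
The plan is to exploit the fact that for the empty graph the minimal separator is $\varnothing$, so that the hypotheses $ij\ci kl$ and $ij\ci ik$ are exactly the (trivial) minimal-separator independences, i.e.\ ordinary marginal independences for a non-sharing and a sharing pair of dyads. First I would invoke exchangeability of $P$: since by assumption both a non-sharing pair $(ij,kl)$ and a sharing pair $(ij,ik)$ are marginally independent, permuting nodes shows that \emph{every} pair of distinct dyads is marginally independent, i.e.\ $d\ci d'$ for all dyads $d\neq d'$.

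For part 1, the key step is to upgrade these pairwise marginal independences to \emph{complete} independence using composition. Fixing a dyad $d$ and enumerating the remaining dyads $d_2,\dots,d_N$, repeated application of composition to $d\ci d_2,\ d\ci d_3,\dots$ yields $d\ci (V\setminus\{d\})$. Decomposition then gives $d\ci(\{d'\}\cup C)$ for every $d'\neq d$ and every $C\subseteq V\setminus\{d,d'\}$, and weak union converts this into $d\ci d'\cd C$. Hence every pair of dyads is completely independent. Once this is established, upward-stability is immediate, since if $d\ci d'\cd C$ then $d\ci d'\cd C\cup\{w\}$ holds as well (both conditional independences hold); and singleton-transitivity holds because its conclusion $d\ci w\cd C$ is always satisfied under complete independence.

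For part 2 I would not repeat the argument but instead pass to the dual independence model $\mathcal{J}^d$. The hypotheses $ij\ci kl\cd V\setminus\{ij,kl\}$ and $ij\ci ik\cd V\setminus\{ij,ik\}$ are precisely the images under dualization of the marginal independences $ij\ci kl$ and $ij\ci ik$; moreover $\mathcal{J}^d$ is again exchangeable, since node permutations commute with complementing the conditioning set within $V$. By Lemma \ref{lem:dual}, intersection for $P$ corresponds to composition for $\mathcal{J}^d$, downward-stability to upward-stability, and singleton-transitivity is self-dual, so part 2 follows from part 1 applied to $\mathcal{J}^d$.

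Compared with the incidence-graph and complement-of-incidence-graph cases, I expect essentially no obstacle here: the only real content is recognizing that the empty-graph minimal separator is $\varnothing$, which collapses the inductive argument used in Propositions \ref{prop:3n0} and \ref{prop:3nn0} (where the minimal separators $C_{ij}$ and $C^d_{ij}$ were nonempty and required swap-invariance together with induction on $|C|$) down to a single base case. The one point deserving a line of justification is that the dual of an exchangeable model is itself exchangeable, which is what licenses the reduction carried out in part 2.
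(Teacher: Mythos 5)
Your proof is correct, and part 1 takes a genuinely different route from the paper's. The paper proves upward-stability ``locally'': it runs an induction on $|C|$ with a base case $C=\varnothing$, a case analysis on whether the dyad $mh$ being added lies in $C_{ij,kl}$ or not, and explicit node swaps to produce the auxiliary marginal independence needed before applying composition and weak union --- deliberately mirroring the structure of the proofs for Propositions \ref{prop:3n0} and \ref{prop:3nn0}. You instead prove the stronger ``global'' statement first: exchangeability turns the two hypothesised marginal independences into $d\ci d'$ for every pair of distinct dyads (every pair is a permutation image of either the sharing or the non-sharing archetype), iterated composition then gives $d\ci V\setminus\{d\}$, and decomposition plus weak union yield $d\ci d'\cd C$ for all $C$. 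Complete pairwise independence makes the conclusions of upward-stability and singleton-transitivity hold unconditionally, so both properties follow at once. This is cleaner and avoids the induction entirely; what the paper's version buys is uniformity with the non-trivial skeleton cases, where no such collapse is available because the minimal separators are nonempty. Your part 2 is the same duality argument as the paper's, and you correctly flag and justify the one point it rests on, namely that the dual of an exchangeable independence model is again invariant under the induced dyad permutations (so that part 1 applies to $\mathcal{J}^d$, with Lemma \ref{lem:dual} translating intersection, downward-stability and singleton-transitivity).
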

\begin{proof}
By Lemma \ref{lem:2}, we know that $i,j,k,l$ are disjoint.

1.\ First, we prove upward-stability. Suppose that $ij\ci kl\cd C$ or $ij\ci ik\cd C$. Notice that, because of exchangeability, the assumptions of the statement hold for $i,j,k,l$ that are all different. We prove that $ij\ci kl\cd C\cup\{mh\}$ or $ij\ci ik\cd C\cup\{mh\}$, for every $mh\notin C\cup\{ij,kl\}$ or $mh\notin C\cup\{ij,ik\}$, respectively, by induction on $|C|$.

The base case is when $C=\varnothing$. First, consider the case where $ij\ci kl$. If $mh\notin C_{ij,kl}$ then by swapping $k$ and $m$ and $l$ and $h$, we obtain $ij\ci mh$. If $mh\in C_{ij,kl}$ then say $mh=jl$. We use $ij\ci ik$ and first swap $i$ and $j$ to obtain $ij\ci jk$. Now we swap $k$ and $l$ to obtain $ij\ci jl$. (The other three cases of $mh$ are similar.) Now composition and weak-union imply the result.

Now, consider the case where $ij\ci ik$. First suppose that $mh\in C_{ij,kl}$. If $mh=il$ then by swapping $j$ and $l$, we obtain $il\ci ik$. If $mh=jk$ then by swapping $i$ and $k$, we obtain $jk\ci ik$. If $mh=jl$ then by swapping $i$ and $j$ and then $k$ and $l$, we obtain $ij\ci jl$. If $mh\notin C_{ij,kl}$ then use $ij\ci kl$ and swap $k$ and $m$ and $l$ and $h$ to obtain $ij\ci mh$. Now composition and weak-union imply the result.

The inductive step is similar to the inductive step of Proposition \ref{prop:3n0} or Proposition \ref{prop:3nn0} depending on the form $ij\ci kl\cd C$ or $ij\ci ik\cd C$.

Singleton-transitivity also follows from the above argument.

2.\ The proof follows from the first part and the duality (Lemma \ref{lem:dual}).
%
%
%
%
%
\end{proof}
\begin{theorem}
Let a distribution $P$ be defined over an exchangeable random network, and $\sk(P)$ be empty. Suppose also that one of the following cases holds for $i,j,k,l$ that are all different:
\begin{itemize}
  \item[a)] $ij\ci kl$ and $ij\ci ik$;
  \item[b)] $ij\ci kl\cd V\setminus\{ij,kl\}$ and $ij\ci ik\cd V\setminus\{ij,ik\}$.
\end{itemize}
Then $P$ is faithful to the empty graph if and only if $P$ satisfies the intersection and composition properties.
\end{theorem}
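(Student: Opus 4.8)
The plan is to derive this theorem as a corollary of Proposition~\ref{prop:3nnnn0} and Proposition~\ref{prop:und}, exactly as was done for the incidence and complement-of-incidence regimes. The starting observation is that when $\sk(P)$ is the empty graph, the induced graph carries no edges, so its undirected interpretation $G_u(P)=\sk(P)$ and its bidirected interpretation $G_b(P)$ coincide (there are no edges to orient). Faithfulness to ``the empty graph'' is therefore unambiguous, and I may invoke either part of Proposition~\ref{prop:und} for the same graph.

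For the forward implication I would argue that faithfulness alone suffices. If $P$ is faithful to the empty graph, then since that graph equals both $G_u(P)$ and $G_b(P)$, part~1 of Proposition~\ref{prop:und} forces $P$ to satisfy intersection (together with singleton-transitivity and upward-stability), while part~2 forces $P$ to satisfy composition (together with singleton-transitivity and downward-stability). In particular $P$ satisfies both intersection and composition, which is all that is claimed. One could alternatively recover composition from upward-stability through Lemma~\ref{lem:1}, but that shortcut is not even needed here.

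For the reverse implication I would assume intersection and composition and split according to which of the two hypotheses holds. Under case~a), the marginal statements $ij\ci kl$ and $ij\ci ik$ allow me to apply Proposition~\ref{prop:3nnnn0}(1): composition is upgraded to upward-stability and singleton-transitivity, and combining these with the assumed intersection, part~1 of Proposition~\ref{prop:und} yields faithfulness to $G_u(P)$, i.e.\ to the empty graph. Under case~b), the fully conditioned statements $ij\ci kl\cd V\setminus\{ij,kl\}$ and $ij\ci ik\cd V\setminus\{ij,ik\}$ allow me to apply Proposition~\ref{prop:3nnnn0}(2): intersection is upgraded to downward-stability and singleton-transitivity, and combining these with the assumed composition, part~2 of Proposition~\ref{prop:und} yields faithfulness to $G_b(P)$, again the empty graph. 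The split on a) versus b) is dictated precisely by the two hypotheses of Proposition~\ref{prop:3nnnn0}.

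The genuine work sits entirely inside Proposition~\ref{prop:3nnnn0} --- propagating a single family of empty-graph independences through composition (or, dually, intersection) up to the full stability and singleton-transitivity properties --- and that has already been carried out. I therefore expect the theorem itself to present no new obstacle: it is a clean assembly of the two propositions. The only point demanding a moment's care is verifying that the empty-graph skeleton collapses the undirected and bidirected interpretations, so that the appropriate part of Proposition~\ref{prop:und} can be matched to the appropriate hypothesis in each of the two cases.
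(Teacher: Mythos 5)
Your proposal is correct and follows exactly the paper's own route: the paper proves this theorem in one line as a direct consequence of Propositions~\ref{prop:3nnnn0} and~\ref{prop:und}, and your assembly --- forward direction from Proposition~\ref{prop:und} applied to the (unambiguous) empty graph, reverse direction by matching case~a) to part~1 and case~b) to part~2 of Proposition~\ref{prop:3nnnn0} before invoking the corresponding part of Proposition~\ref{prop:und} --- is precisely the intended argument, spelled out in more detail.
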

\begin{proof}
The proof follows from Propositions \ref{prop:3nnnn0} and \ref{prop:und}.
\end{proof}
\section{Summary and discussion}\label{sec:5}
Our results concern the conditional independence models induced by exchangeable distributions for random vectors and random networks. We have shown that exchangeable random vectors are completely independent of each other or completely dependent of each other if they satisfy intersection and composition properties. In addition, they are marginally independent if there exists at least one independence statement and the intersection property is satisfied. The intersection property is well-understood, and we know that a positive joint density is a sufficient condition for it to hold; thus it is particularly important to study the composition property for exchangeable random vectors, which, in this case, is simplified to $A\ci B\cd C\Rightarrow A\ci (B\cup D)\cd C$, for every disjoint $D$.

For exchangeable random networks, as it turned out, the situation is much more complicated. As an important extension of our results in \cite{lau17}, we showed that the independence structures of exchangeable random networks that can be represented by a graph in graphical model sense are one of the six possible cases: completely dyadic-independent, faithful to the undirected or bidirected incidence graphs, faithful to the undirected or bidirected complement of the incidence graph, or completely dyadic-dependent.

The undirected and bidirected versions of the incidence graph and its complement are in fact dual to each other, so in a sense there are four regimes available. In other words, with exchangeability and duality factored in, one passes from the skeleton to the graphical Markov equivalence class. Exploiting this duality, all the results for the undirected case can be extended to the bidirected case. In addition, the remaining four cases are just two cases modulo graph complement as well. Although we failed to do so, it would
be especially nice if this duality could be understood, so that one can simply present the results with two-fold duality arguments.

We have provided a simple test to decide in which of the six regimes an exchangeable random network lies in cases when it has a ``structured" independence structure. The main two elements of the four ``non-trivial" cases is whether an independence is of form $ij\ci kl\cd C$ or $ij\ci ik\cd C$; and whether $C_{ij,kl}=\{ik,il,jk,jl\}$ is in $C$ or is disjoint from $C$.

We, in fact, do not have ``necessary" and sufficient conditions for whether an exchangeable random network is structured, but rather sufficient conditions that, in addition to the expected intersection and composition properties, are mainly based on whether a minimal (maximal) separator is invariant under the mentioned node swaps of the network. For testing purposes, it is important to stress that the only conditioning set that needs to be tested are the minimal ones, which would significantly improve the computational complexity of any relevant algorithms.

Indeed, in practice, it is more important to understand the independence structures of exchangeable statistical network models for  random networks with (in most situations) binary dyads. One point is that binary distributions always satisfy singleton-transitivity \cite{drts09}, a necessary condition for faithfulness,  although under our sufficient assumptions this condition is automatically satisfied. In general, however, it would be useful to study which actual exchangeable models for networks (such as exchangeable exponential random graph models \cite{was96}) satisfy the provided conditions, both when we deal with binary random networks or weighted ones.

\appendix

\section{Proof of Theorem \ref{thm:2}}\label{app}
As mentioned in Section \ref{sec:41}, we focus on the class of chain mixed graphs, which contains simultaneous undirected, directed, or bidirected edges, with the separation criterion introduced in \cite{sadl16}. We refrain from defining this class explicitly as it is not needed for our purpose. However, we note that we can focus only on simple graphs since it was shown in \cite{sadl16} that for any (non-simple) chain mixed graph there is a Markov equivalent simple graph (the collection of which constitutes the class of \emph{anterial graphs}). We also only focus on \emph{maximal graphs}, which are graphs where a missing edge between vertices $u$ and $v$ implies that there exists a separation statement of form $u\dse v\cd C$, for some $C$ -- again it was shown in \cite{sadl16} that for any (non-maximal) chain mixed graph there is a Markov equivalent maximal graph.

First, we need the following additional definitions: A \emph{section} $\rho$ of a walk  is a maximal subwalk consisting only of lines, meaning that there is no other subwalk that only consists of lines and includes $\rho$. Thus, any walk decomposes uniquely into sections; these are not necessarily edge-disjoint and sections may also be single vertices. A section $\rho$ on a walk $\omega$ is called a  \emph{collider section} if one of the following walks is a subwalk of $\omega$: $i\fra\rho\fla\,j$, $i\arc\rho\fla\,j$, $i\arc\rho\arc\,j$. All other sections on $\omega$ are called \emph{non-collider} sections. A \emph{trisection} is a walk $\langle i,\rho,j\rangle$, where $\rho$ is a section.  If in the trisections, $i$ and $j$ are distinct and not adjacent then the trisection is called \emph{unshielded}. We say that a trisection is collider or non-collider if its section $\rho$ is collider or non-collider respectively.

We say that a walk $\omega$ in a graph is \emph{connecting} given $C$ if all collider sections of $\omega$ intersect $C$ and all non-collider sections are disjoint from $C$. For pairwise disjoint subsets $A,B,C$, we say that $A$ and $B$ are separated by $C$ if there are no connecting walks between $A$ and $B$ given $C$, and we use the notation $A\dse B \cd C$.
%
\begin{lemma}\label{lem:3n}
If two maximal graphs $G$ and $H$ are Markov equivalent then $G$ and $H$ have the same unshielded collider trisections.
\end{lemma}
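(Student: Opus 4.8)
The plan is to prove both directions of the implication via the contrapositive for one direction and a direct structural argument for the other. Markov equivalence means $\mathcal{J}(G)=\mathcal{J}(H)$, i.e.\ the two graphs induce exactly the same separation statements $A\dse B\cd C$. Since both graphs are \emph{maximal}, a missing edge between $u$ and $v$ corresponds to a genuine separation statement $u\dse v\cd C$ for some $C$; this fact will be the bridge between the combinatorial structure (edges, trisections) and the independence model.

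First I would show that $G$ and $H$ share the same skeleton. If $u$ and $v$ are non-adjacent in $G$, then by maximality there is some $C$ with $u\dse v\cd C$ in $G$; by Markov equivalence the same statement holds in $H$, so $u$ and $v$ cannot be adjacent in $H$ (an edge between them would furnish a connecting walk, namely the single-edge trisection $\langle u,v\rangle$, which no conditioning set can block). By symmetry the skeletons coincide. This reduces the problem to showing that the \emph{unshielded} trisections classified as colliders agree across the two graphs, given a common skeleton.

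Next I would fix an unshielded collider trisection $\langle i,\rho,j\rangle$ in $G$, where the endpoints $i$ and $j$ are non-adjacent. The key idea is to characterize a collider section by how it interacts with separation statements between $i$ and $j$: a non-collider section must be \emph{outside} the conditioning set to connect, whereas a collider section must \emph{intersect} it. Concretely, let $Z$ denote the inner vertices of the section $\rho$ (or a representative vertex of each collider/non-collider section along a minimal connecting route). For a collider trisection, any separating set $C$ for $i,j$ must \emph{avoid} the relevant collider vertex to break the would-be connecting walk, whereas for a non-collider trisection the opposite holds. I would make this precise by exhibiting, for the collider case in $G$, a pair of conditioning sets — one containing the section vertex and one not — that produce different separation verdicts in $G$, and then argue that reproducing the same separation verdicts in $H$ forces the corresponding trisection in $H$ to also be a collider. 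Since $\mathcal{J}(G)=\mathcal{J}(H)$ and the skeletons agree, the section $\rho$ sits on the same vertices in both graphs, so the only freedom left is the arrowhead pattern (collider versus non-collider), and the independence model pins this down.

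The main obstacle I anticipate is the bookkeeping around \emph{sections} rather than single vertices: because a section is a maximal line-subwalk, a trisection's collider/non-collider status is governed by arrowheads at the \emph{endpoints} of the section, and one must ensure the test conditioning sets interact with the whole section correctly (intersecting it for colliders, being disjoint from it for non-colliders) without inadvertently opening or closing other walks between $i$ and $j$. The cleanest route is probably to pass to a shortest connecting walk realizing the difference and to argue locally, using the fact that $i$ and $j$ are unshielded (non-adjacent) so that the only connection of interest runs through $\rho$; this isolates the effect of $\rho$'s collider status on the separation statements and lets the equality of independence models transfer the collider classification from $G$ to $H$ and back.
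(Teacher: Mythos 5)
Your proposal follows essentially the same route as the paper: maximality plus Markov equivalence gives a common skeleton, and the collider/non-collider status of an unshielded trisection is then read off from how a separating set for its (non-adjacent) endpoints interacts with the section. However, the middle step is over-engineered, and the embellishment is where the only real trouble lies. You do not need a \emph{pair} of conditioning sets producing different verdicts: since $i$ and $j$ are non-adjacent in the common skeleton, maximality supplies a single $C$ with $i\dse j\cd C$, this separation holds in both graphs, and hence the trisection walk itself must be blocked given $C$ in both. Blocking a collider section requires it to be disjoint from $C$, while blocking a non-collider section requires it to intersect $C$; so if the trisection were a collider in one graph and a non-collider in the other, the section would have to both meet and avoid the same $C$ --- an immediate contradiction, which is exactly the paper's one-line argument. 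This also dissolves the ``main obstacle'' you anticipate: you never need to certify a non-separation (which would indeed require controlling all other walks between $i$ and $j$, and which in any case would not transfer usefully, since a connecting walk witnessing $i\notdse j\cd C'$ in $H$ need not be the trisection walk at all); you only use the necessary condition that a separation blocks \emph{every} walk, checked on the single walk of interest. Your side remark that the section decomposition could in principle differ between $G$ and $H$ is a fair subtlety that the paper's own proof also glosses over.
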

\begin{proof}
Because of maximality, $G$ and $H$ have the same skeleton. An unshielded trisection in these graphs cannot be a collider in one and a non-collider in the other. This is because if that is the case (say an unshielded  trisection between $i$ and $j$ and separation $i\dse j\cd C$), by Markov equivalence, it implies that an inner vertex  of the corresponding section is not in $C$ in one, but in $C$ in the other, which is a contradiction.
\end{proof}
We will not need the converse of the above lemma, but only a weaker result:
\begin{lemma}\label{lem:3nn}
If there are no unshielded collider trisections in $G$ then $G$ is Markov equivalent to $\sk(G)$.
\end{lemma}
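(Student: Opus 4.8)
The plan is to show that the separation relation induced by $G$ coincides with that of $\sk(G)$, where the latter is interpreted under the undirected separation criterion. Since $G$ and $\sk(G)$ share the same skeleton (they have identical edges up to arrowheads), it suffices to prove that $A \dse B \cd C$ holds in $G$ if and only if it holds in $\sk(G)$ for all pairwise disjoint $A,B,C$. The key observation is that in $\sk(G)$, every section is a single vertex and every trisection is trivially a non-collider, so a walk in $\sk(G)$ is connecting given $C$ precisely when all its inner vertices lie outside $C$ --- this is exactly ordinary undirected separation. Thus I must compare the mixed-graph connecting-walk criterion on $G$ with plain undirected connectivity on the common skeleton.

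First I would argue the easy direction: if a walk is connecting in $G$ given $C$, then its non-collider sections avoid $C$ while its collider sections meet $C$. Replacing each edge by a line yields a walk in $\sk(G)$ on the same vertex sequence; but to conclude this walk connects in $\sk(G)$ (i.e.\ avoids $C$ at all inner vertices) one must handle the collider sections, whose vertices do lie in $C$. This is where the hypothesis enters. The main work is the converse and this collider-handling: I would show that because $G$ has no unshielded collider trisections, any connecting walk in $\sk(G)$ given $C$ can be promoted to a connecting walk in $G$ given $C$, and that collider sections can effectively be bypassed. Concretely, the absence of unshielded collider trisections means that whenever a trisection $\langle i,\rho,j\rangle$ is a collider, the endpoints $i$ and $j$ are adjacent in $G$; hence a shortcut edge $ij$ exists, and one may reroute a walk so as to avoid passing through the collider section that sits inside $C$. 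Iterating this rerouting eliminates all collider sections from a connecting walk, leaving a walk all of whose (single-vertex, non-collider) sections avoid $C$, which is precisely an undirected connecting walk in $\sk(G)$.

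The hard part will be the rerouting argument: I must verify that replacing a collider section by the shortcut edge guaranteed by shieldedness preserves the connecting property of the whole walk given $C$, and that this process terminates. The delicate point is that the shortcut edge $ij$ created by shieldedness may itself be an arrow, arc, or line, so the newly formed trisections at $i$ and $j$ on the rerouted walk must be checked to still satisfy the connecting conditions (non-colliders off $C$, colliders on $C$). I would proceed by induction on the number of collider sections of the walk, at each step using maximality (to guarantee shortcut edges correspond to genuine separations) together with the no-unshielded-collider hypothesis to reduce the count, and handle the boundary sections adjacent to $A$ and $B$ carefully. Once every connecting walk in either graph can be matched by a connecting walk in the other, the two separation relations agree, so the induced independence models coincide and $G$ is Markov equivalent to $\sk(G)$.
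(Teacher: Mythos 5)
Your proposal follows essentially the same route as the paper: prove that separation in $G$ and in $\sk(G)$ coincide by taking a walk, using the no-unshielded-collider hypothesis to replace each collider section by the shortcut edge between its flanking vertices, and iterating until either all sections are non-collider or the endpoints become adjacent. Two minor points: in $\sk(G)$ the whole walk is a single (non-collider) section rather than each vertex being its own section, though your conclusion about what ``connecting'' means there is still right; and maximality is not needed for the shortcut step --- shieldedness alone supplies the edge.
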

\begin{proof}
First, we show that if $A\notdse B\cd C$ in $G$ then $A\notdse B\cd C$ in $\sk(G)$: It holds that there is a connecting walk $\omega$ in $G$ between $A$ and $B$ given $C$. If there are no collider sections on $\omega$ then no vertex on $\omega$ is in $C$. Therefore, $\omega$ is a connecting walk in $\sk(G)$. If there is a collider section with endpoints $i$ and $j$ on $\omega$ then it has to be shielded. We can now replace this collider section with the $ij$ edge. Applying this method repeatedly, we obtain a walk that has no vertex in $C$, and is, therefore, connecting in $\sk(G)$.

Now, we show that if $A\dse B\cd C$ in $G$ then $A\dse B\cd C$ in $\sk(G)$: Consider an arbitrary path between $A$ and $B$ in $\sk(G)$. We need to show that there is a vertex on this path that is in $C$. Consider this path in $G$ and call it $\omega$. If all sections on $\omega$ are non-collider then there must be a vertex on $\omega$ that is in $C$, and we are done. Hence, consider a collider section with endpoints $i$ and $j$ on $\omega$. This section is shielded; thus, replace the section with the $ij$ edge. By repeating this procedure, we either obtain a path, with a subset of vertices of $\omega$, whose sections are all non-collider; or we eventually obtain an edge between the endpoints of $\omega$, which is impossible.
\end{proof}
The following lemma extends the concept of exchangeability for random networks to graphs in graphical models:
\begin{lemma}\label{lem:3}
Suppose that a distribution $P$  over an exchangeable random network with node set $\mathcal{N}$ is faithful to a graph $G$, and let $\pi$ be a permutation function on $\mathcal{N}$. Let also $H$ be the graph obtained by permuting the vertices of $G$ by vertex $ij$ being mapped to $\pi(i)\pi(j)$. Then $P$ is faithful to $H$; hence, $G$ and $H$ are Markov equivalent.
\end{lemma}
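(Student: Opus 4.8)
The plan is to use exchangeability to transport both sides of the faithfulness equivalence for $G$ along the permutation, and then to chain the resulting equivalences. Write $\sigma$ for the permutation of dyads induced by $\pi$, so that $\sigma(ij)=\pi(i)\pi(j)$; this is a well-defined bijection on $\mathcal{D}(\mathcal{N})$ because $\pi$ is a bijection on $\mathcal{N}$. By construction $H$ is the graph obtained from $G$ by the vertex relabeling $\sigma$, i.e.\ the vertex $\sigma(ij)$ occupies in $H$ the position that $ij$ occupies in $G$.

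First I would show that exchangeability lifts to an invariance of the induced independence model: for pairwise disjoint sets of dyads $A,B,C$, it holds that $A\ci B\cd C$ with respect to $P$ if and only if $\sigma(A)\ci\sigma(B)\cd\sigma(C)$ with respect to $P$. This is a direct consequence of the defining identity of weak exchangeability: invariance of the joint law under relabeling the dyads by $\sigma$ forces the joint distribution of $(X_d)_{d\in A\cup B\cup C}$ to coincide with that of $(X_{\sigma(d)})_{d\in A\cup B\cup C}$, and conditional independence is a property of this joint law alone. Applying the equivalence to $\sigma^{-1}$---which is induced by the permutation $\pi^{-1}$---yields the form used below.

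Second, I would record the purely combinatorial fact that separation is invariant under relabeling of the vertices. Since $H$ is $G$ with its vertices renamed by $\sigma$, the definition of a connecting walk depends only on the abstract graph and not on the labels, so $A\dse B\cd C$ in $H$ if and only if $\sigma^{-1}(A)\dse\sigma^{-1}(B)\cd\sigma^{-1}(C)$ in $G$.

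Finally I would chain the equivalences. For pairwise disjoint $A,B,C$,
\begin{align*}
A\ci B\cd C \text{ w.r.t. } P
&\iff \sigma^{-1}(A)\ci\sigma^{-1}(B)\cd\sigma^{-1}(C) \text{ w.r.t. } P\\
&\iff \sigma^{-1}(A)\dse\sigma^{-1}(B)\cd\sigma^{-1}(C) \text{ in } G\\
&\iff A\dse B\cd C \text{ in } H,
\end{align*}
where the first step is the transfer from exchangeability, the second is faithfulness of $P$ to $G$, and the third is the relabeling invariance of separation. Thus $P$ is faithful to $H$, and since $P$ is then faithful to both $G$ and $H$, the two graphs induce one and the same independence model $\mathcal{J}(P)$, which is precisely Markov equivalence. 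The routine ingredients are the graph-relabeling identity and the final chaining; the one step demanding care---and hence the main obstacle---is making the first transfer precise, namely verifying that the distributional invariance encoded in weak exchangeability genuinely descends to conditional-independence statements under $\sigma$, which amounts to checking that conditional independence is determined by the relevant marginal law and that this law is preserved by $\sigma$.
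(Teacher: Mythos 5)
Your proposal is correct and follows essentially the same route as the paper's proof: the paper defines the permuted independence model $\mathcal{J}_\pi(P)$, notes it is faithful to $H$ (your relabeling-invariance of separation) and equals $\mathcal{J}(P)$ by exchangeability (your transfer step), and concludes. You have merely spelled out in more detail the two facts the paper treats as immediate.
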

\begin{proof}
Let $\mathcal{J}_\pi(P)$ be the independence model obtained from $\mathcal{J}(P)$ by mapping independence statements $A\ci B\cd C$ to $\pi(A)\ci \pi(B)\cd \pi(C)$, where $\pi(A)=\{\pi(i)\pi(j):\n ij\in A\}$, etc. It is obvious that $\mathcal{J}_\pi(P)$ is faithful to $H$. Because of exchangeability, we also have that $\mathcal{J}(P)=\mathcal{J}_\pi(P)$. Therefore, $P$ is faithful to $H$. Hence, $G$ and $H$ are Markov equivalent.
%
\end{proof}
We are now ready to provide the proof of Theorem \ref{thm:2}:
\begin{proof}[Proof of Theorem \ref{thm:2}]
Cases 1 and 6 are trivial. Thus, we need to consider cases 2 and 3 of Proposition \ref{prop:7}. By Lemma \ref{lem:3nn}, if there are no unshielded collider trisections in $G$ then $G$ is Markov equivalent to $L_{-}(n)$ or $L^c_{-}(n)$, respectively. Thus, suppose that there is an unshielded collider trisection in $G$.

For case  2 of Proposition \ref{prop:7}, we can assume that there is an edge $12,13$ in an unshielded collider trisection, such that there is an arrowhead at $13$ on $12,13$. Now, consider an arbitrary edge $ij,ik$ in $G$. Let $\pi$ be a permutation that only swaps $i$ and $1$, $j$ and $2$, and $k$ and $3$, and call the resulted graph $H$. (Notice that it is possible that $j=3$ and $k=2$.) By Lemma \ref{lem:3}, $H$ and $G$ are Markov equivalent. In addition, $ik$ is in an unshielded collider trisection in $H$ with an arrowhead at vertex $ik$ on $ij,ik$. Hence, by Lemma \ref{lem:3n}, $ik$ is in an unshielded collider trisection in $G$ with an arrowhead at vertex $ik$ on $ij,ik$. Since $i,j,k$ are arbitrary, and in particular, every edge could be mapped to $ij,ik$ by a permutation, we conclude that there is an arrowhead at every vertex on every edge in $G$. Therefore, $G$ is a bidirected graph (and Markov equivalent to $L_{\leftrightarrow}(n)$).

%

For case  3 of Proposition \ref{prop:7}, we assume that there is an edge $12,34$ in an unshielded collider trisection, such that there is an arrowhead at vertex $34$ on $12,34$. In this case, we apply a similar method to the previous case, but by a permutation that only swaps $i$ and $1$, $j$ and $2$, $k$ and $3$, and $l$ and $4$.
\end{proof}
\section*{Acknowledgements}
The author is grateful to Steffen Lauritzen and Alessandro Rinaldo for raising this problem in one of our numerous conversations. The author is also truly thankful to the two anonymous referees, whose comments substantially improved the paper.
\bibliographystyle{imsart-nameyear}
\bibliography{bib}

\end{document}